\newcommand{\Far}{\text{Far}}
\newcommand{\co}{\text{co}}
\newtheorem{theorem}{Theorem}[section]
\newtheorem{corollary}[theorem]{Corollary}
\newtheorem{proposition}[theorem]{Proposition}
\theoremstyle{definition}
\newtheorem{definition}[theorem]{Definition}
\newtheorem{remark}[theorem]{Remark}
\newtheorem{example}[theorem]{Example}
\numberwithin{equation}{section}
\begin{document}


\baselineskip=17pt


\title{On a generalisation of Local Uniform Rotundity}

\author{Uday Shankar Chakraborty\\
Department of Mathematics\\ 
Assam University, Silchar\\
Assam, India\\
E-mail: udayhkd@gmail.com}

\date{}

\maketitle


\renewcommand{\thefootnote}{}

\footnote{2010 \emph{Mathematics Subject Classification}: Primary 46B20; Secondary 41A65.}

\footnote{\emph{Key words and phrases}: HLUR spaces, HS property, locally uniformly rotund, compactly locally uniformly rotund, locally $U$-convex, farthest point, anti-Daugavet property}

\renewcommand{\thefootnote}{\arabic{footnote}}
\setcounter{footnote}{0}


\begin{abstract}
In this paper we investigate the property (HLUR), a generalisation of (LUR) property of a Banach space. A Banach space having the property (HLUR) is called an HLUR space. We characterise (HLUR) property with the help of known geometric properties and study various properties of HLUR spaces. We show that for any finite dimensional Banach space, the property (HLUR) coincides with anti-Daugavet property of the space. We also show some applications of HLUR spaces in connection with farthest points of sets.
\end{abstract}

\section{Introduction}
Let $X$ be a real Banach space and $X^*$ its dual. For a sequence $(x_n)\subset X$ and for $x\in X$, the norm and weak convergence of $(x_n)$ to $x$ are denoted by $x_n\to x$ and $x_n\xrightarrow{w}x$ respectively. $B_X$ and $S_X$ denote the closed unit ball and unit sphere in $X$ respectively. For $0\leq \delta<1, x\in S_X$ and $x^*\in S_{X^*}$, let $S(X,x^*,\delta)=\{x\in B_X: x^*(x)\geq 1-\delta\}$ and $S(X^*,x,\delta)=\{x^*\in B_{X^*}: x^*(x)\geq 1-\delta\}$ be the slices of $B_X$ and $B_{X^*}$ determined by $x^*, \delta$ and $x,\delta$ respectively. For $x\in S_X, x^*\in S_{X^{*}}$, let $J(x)=\{y^*\in S_{X^*}: y^*(x)=1\}$ and  $A_0(x)=\bigcup\{S(X,x^*,0):x^*\in J(x)\}$. For $x\in S_X$ and $0\leq \delta\leq 1$, let $$D[x,\delta]=\{y\in B_X: \|\frac{x+y}{2}\|\geq 1-\delta\} \text{ and }C[x,\delta]=D[x,\delta]\cap S_X.$$ It is clear that $C[x,0]=D[x,0]=A_0(x)=\{y\in S_X: \|x+y\|=2\}$. \\
\begin{definition}
$X$ is said to be locally uniformly rotund (LUR)\cite{Lovagila} if for all $x\in S_X, x_n\in S_X (n\in \mathbb{N})$ with $\|x_n+x\|\to 2$, we have $x_n\to x$.
\end{definition}

 Local uniform rotundity plays an important role in the study of geometry of Banach spaces. Since its introduction, Banach spaces with that property have been thoroughly studied over the years from various perspectives. Several generalisations of locally uniformly rotund spaces have been suggested by many mathematicians and their properties have also been extensively studied. One such natural generalisation is the property (HLUR), introduced in \cite{Chakrabarty}. 
 
 \begin{definition}
$X$ is said to have the property (HLUR) if for every $x, x_n\in S_X(n\in \mathbb{N})$ with $\|x_n+x\|\to 2$ and for every $x^*\in J(x)$, we have $d(x_n,S(X,x^*,0))\to 0$. 
 \end{definition}
It is easy to verify that if $X$ satisfies the property (HLUR), then $d(x_n,S(X,x^*,0))\to 0$ whenever $x\in S_X$, $x_n\in B_X$ for every $n\in \mathbb{N}$ and $x^*\in J(x)$.\\\\
 We call a space satisfying the property (HLUR), an HLUR space. Every LUR space is HLUR and the converse is true for rotund spaces \cite{Chakrabarty}.\\\\ Although in recent times numerous generalisations of LUR spaces have been studied by many authors across the globe, as for instance see \cite{Banas, Cabrera, Dutta1, Dutta2, Revalski}, it seems that the notion (HLUR) remained unnoticed. Motivated by this, in this present study, we explore various aspects of HLUR spaces including its geometric properties and certain applications in connection with farthest distance and bounded linear operators satisfying the Daugavet equation.

\section{Preliminaries}
In this section we enlist some important definitions which will be used in subsequent sections.

\begin{definition} 
\begin{enumerate}[(i)]
\item $X$ is said to satisfy the property (HS)\cite{Chakrabarty} if for every $x^*\in S_{X^*}$ with $S(X,x^*,0)\neq \emptyset$ and for every sequence $(x_n)$ in $B_X$ with $x^*(x_n)\to 1$, we have $d(x_n, S(X,x^*,0))\to 0$.

\item $X$ is said to be compactly locally uniformly rotund (CLUR)\cite{Vlasov} , if for every $x\in S_X$ and $x_n\in S_X (n\in \mathbb{N})$ with $\|x_n+x\|\to 2$, $(x_n)$ has a convergent subsequence. 

\item $X$ is said to be weakly compactly locally uniformly rotund (weakly CLUR)\cite{Dutta1} , if for every $x\in S_X$ and $x_n\in S_X (n\in \mathbb{N})$ with $\|x_n+x\|\to 2$, $(x_n)$ has a subsequence that converges weakly to some point in $A_0(x)$. 

\item $X$ is said to be rotund if the cardinality of $S(X,x^*,0)$ is at most one for all $x^*\in S_{X^*}$.

\item $X$ is called smooth if for each $x\in S_X$, $J(x)$ is a singleton.

\item $X$ is said to satisfy the Kadets-Klee property (KK)\cite{Meggin} if the norm and weak convergence in $S_X$ coincide. Equivalently, $X$ is said to satisfy (KK) if $x_n \to x$ whenever $(x_n)$ is a sequence in $X$ and $x\in X$ such that $x\xrightarrow{w}x, \|x_n\|\to \|x\|$.

\item $X$ is said to be strongly rotund\cite{Meggin} if it is reflexive, rotund and it satisfies the property (KK).

\item $X$ is said to be alternatively convex or smooth (ACS)\cite{Kadets} if for all $x,y\in S_X$ and for all $x^*\in S_{X^*}$, $x^*(y)=1$ whenever $\|x+y\|=2$ and $x^*(x)=1$.

\item $X$ is said to have property (I) \cite{Edelstein} if every closed and bounded convex set in $X$ can be expressed as the intersection of a family of closed balls.
\end{enumerate}
\end{definition}

We know that $X$ is called a $U$-convex space \cite{Lau} if for every $\epsilon>0$ there exists $\delta>0$ such that for all $x,y\in S_X$ with $\|\frac{x+y}{2}\|\geq 1-\delta$ implies $x^*(y)>1-\varepsilon$ for all $x^*\in J(x)$. In \cite{Dutta2}, two localisations of $U$-convexity were suggested. 

\begin{definition}$X$ is called 
\begin{enumerate}[(i)]
\item locally $U$-convex \cite{Dutta1} if for each $x,x_n(n\in \mathbb{N})\in S_X$ with $\|x_n+x\|\to 2$, there exists $x^*\in J(x)$ such that $x^*(x_n)\to 1$.

\item strongly locally $U$-convex if for each $x,x_n(n\in \mathbb{N})\in S_X$ with $\|x_n+x\|\to 2$ and for each $x^*\in J(x)$, $x^*(x_n)\to 1$.
\end{enumerate}

\end{definition} 

\begin{definition} Let $K$ be a closed convex subset of $X$.
\begin{enumerate}[(i)]
\item A subset $C$ of $K$ is said to be an exposed face of $K$ if there exists $x^*\in X^*$ such that $C=\{x\in K: x^*(x)=\sup x^*(K)\}$.

\item A subset $C$ of $K$ is said to be a strongly exposed face of $K$ if there exists $x^*\in X^*$ satisfying $C=\{x\in K: x^*(x)=\sup x^*(K)\}$ and for every open subset $U$ of $K$ containing $C$, there exists $\delta>0$ such that $\{x\in K: x^*(x)\geq \sup x^*(K)-\delta\}$ is contained in $U$.

\item A point $x\in K$ is said to be an exposed (res. strongly exposed) point of $K$ if $\{x\}$ is an exposed (res. strongly exposed) face of $K$.
\end{enumerate}
\end{definition}

\begin{definition}
A sequence of sets $(C_n)$ in $X$ is said to converge to $C_0\subset X$ in upper Hausdorff sense, denoted by $C_n\xrightarrow{H^{+}}C_0$, if for every $\varepsilon>0$, $C_n\subset C_0+\varepsilon B_X$ eventually, and in lower Hausdorff sense denoted by $C_n\xrightarrow{H^{-}}C_0$, if for every $\varepsilon>0$, $C_0\subset C_n+\varepsilon B_X$ eventually. A sequence $(C_n)$ converges to $C_0$ in Hausdorff sense denoted by $C_n\xrightarrow{H}C_0$, if $C_n\xrightarrow{H^{+}}C_0$ and $C_n\xrightarrow{H^{-}}C_0$. 
\end{definition}
For a nested sequence of subsets $(C_n)$ of $X$, i.e. for a sequence of subsets $(C_n)$ of $X$ with $C_{n+1}\subset C_n$ for all $n\in \mathbb{N}$, $C_n\xrightarrow{H}C_0 $ if and only if $C_n\xrightarrow{H^{+}}C_0 $.

\begin{definition}
$X$ is said to be anti-Daugavet \cite{Kadets} if for every continuous linear operator $T:X\to X$ satisfying the Daugavet equation $$\|I+T\|=1+\|T\|,$$ where $I:X\to X$ is the identity operator, $\|T\|$ is an approximate point spectrum of $T$.
\end{definition}

\begin{definition}
A Banach space $X$ is said to be asymptotically isometric to $\ell^1$ \cite{Dowling} if it has a normalised Schauder basis $(x_n)$ such that for a sequence $(\lambda_n)$ in $(0,1)$ increasing to $1$, we have that $$\sum\limits_{n=1}^{\infty}\lambda_n |t_n|\leq \|\sum\limits_{n=1}^{\infty} t_n x_n\|$$ for all $(t_n)\in \ell^1$. \\\\ If $(x_n)$ is a normalised sequence in $X$ satisfying the above inequality, then the closed linear span of $(x_n)$ is an asymptotically isometric copy of $\ell^1$.
\end{definition}

\section{Banach spaces with the property (HS)}

\begin{theorem}
$X$ satisfies the property (HS) if and only if every exposed face of $B_X$ is strongly exposed.
\end{theorem}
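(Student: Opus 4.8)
The plan is to express everything through the slices $S(X,x^*,\delta)$ of $B_X$. Up to the degenerate exposed faces — namely $B_X$ itself (exposed by $x^*=0$), which is trivially strongly exposed, and the empty ones coming from functionals that miss their supremum — every exposed face of $B_X$ is of the form $C=S(X,x^*,0)$ for some $x^*\in S_{X^*}$ with $S(X,x^*,0)\neq\emptyset$: for such an $x^*$ one has $\sup x^*(B_X)=1$, so $\{x\in B_X:x^*(x)=\sup x^*(B_X)\}=S(X,x^*,0)$, and conversely each nonempty $S(X,x^*,0)$ is the exposed face of $B_X$ determined by $x^*$. Since the slices $S(X,x^*,\delta)$ decrease to $C=S(X,x^*,0)$ as $\delta\downarrow 0$, I would first record that $C$ is strongly exposed by $x^*$ precisely when the nested sequence $S(X,x^*,1/n)$ converges to $C$ in the upper Hausdorff sense, i.e.\ when for every $\varepsilon>0$ there is $\delta>0$ with $S(X,x^*,\delta)\subseteq C+\varepsilon B_X$ — the implication that is actually used being obtained by feeding the relatively open neighbourhood $\{y\in B_X:d(y,C)<\varepsilon\}$ of $C$ into the definition of a strongly exposed face.

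Granting this reformulation, the theorem is essentially formal. \emph{($\Leftarrow$)} Suppose every exposed face of $B_X$ is strongly exposed. Fix $x^*\in S_{X^*}$ with $C:=S(X,x^*,0)\neq\emptyset$ and a sequence $(x_n)\subseteq B_X$ with $x^*(x_n)\to 1$. Given $\varepsilon>0$, pick $\delta>0$ with $S(X,x^*,\delta)\subseteq C+\varepsilon B_X$; for all large $n$ we have $x^*(x_n)\geq 1-\delta$, hence $x_n\in S(X,x^*,\delta)\subseteq C+\varepsilon B_X$, i.e.\ $d(x_n,C)\leq\varepsilon$ eventually. Letting $\varepsilon\downarrow 0$ gives $d(x_n,C)\to 0$, which is (HS). \emph{($\Rightarrow$)} Suppose $X$ has (HS) and let $C=S(X,x^*,0)$ be a nonempty exposed face with $\|x^*\|=1$. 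If $C$ were not strongly exposed, there would be $\varepsilon_0>0$ and, for each $n$, a point $x_n\in S(X,x^*,1/n)$ with $d(x_n,C)\geq\varepsilon_0$; but then $(x_n)\subseteq B_X$ and $x^*(x_n)\geq 1-1/n\to 1$, so (HS) forces $d(x_n,C)\to 0$, a contradiction. Hence $C$ is strongly exposed, and together with the trivial cases every exposed face of $B_X$ is strongly exposed.

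The only step that takes any care is the preliminary identification of ``strongly exposed face'' — stated via arbitrary relatively open neighbourhoods of $C$ in $B_X$ — with the quantitative slice condition $S(X,x^*,\delta)\subseteq C+\varepsilon B_X$ (equivalently, Hausdorff convergence of the nested slices to $C$); after that, each direction is a one-paragraph argument by contradiction. I would also be careful at the outset to isolate the degenerate exposed faces ($B_X$ and the empty ones), so that the main argument deals only with proper, nonempty exposed faces.
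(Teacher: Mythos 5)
Your proof is correct and follows essentially the same route as the paper: identify the (nonempty) exposed faces with sets $S(X,x^*,0)$, $x^*\in S_{X^*}$, and translate strong exposedness into the quantitative slice condition $S(X,x^*,\delta)\subseteq S(X,x^*,0)+\varepsilon B_X$, after which each direction is a short direct or contradiction argument with a sequence $x_n\in S(X,x^*,1/n)$. If anything, your explicit reformulation via $\varepsilon$-neighbourhoods (and your isolation of the degenerate faces) is slightly more careful than the paper's version, which argues with arbitrary open sets $U$ and leaves implicit the step relating $x_n\notin U$ to $d(x_n,S(X,x^*,0))\not\to 0$.
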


\begin{proof}
Let $X$ have the property (HS) and let $C$ be an exposed face of $B_X$. Therefore there exists $x^*\in S_{X^*}$ such that $C=S(X,x^*,0)$. If possible, let $S(X,x^*,0)$ be not a strongly exposed face of $B_X$. Then there exists an open subset $U$ of $B_X$ containing $S(X,x^*,0)$ such that for every $\delta >0$, $S(X,x^*,\delta)\not\subset U$. It follows that for each $n\in \mathbb{N}$, there exists $x_n\in S(X,x^*,\frac{1}{n})\setminus U$. Therefore $x^*(x_n)\to 1$ but $d(x_n,S(X,x^*,0))\not\to 0$. This is contrary to the fact that $X$ satisfies the property (HS). Hence $C$ must be a strongly exposed face of $B_X$.\\ Conversely, let every exposed face of $B_X$ be strongly exposed. To show that $X$ satisfies the property (HS), suppose $x^*\in S_{X^*}$ such that $S(X,x^*,0)\neq \emptyset$ and let $(x_n)$ be a sequence in $B_X$ such that $x^*(x_n)\to 1$. Thus $S(X,x^*,0)$ is an exposed face of $B_X$ and so by hypothesis, is a strongly exposed face. Let $U$ be any open subset of $B_X$ containing $S(X,x^*,0)$. Therefore, there exits $\delta>0$ such that $S(X,x^*,\delta)\subset U$. Now $x^*(x_n)\to 1$ implies that $x_n\in S(X,x^*,\delta)$ eventually, and so $x_n\in U$ eventually. Hence $d(x_n, S(X,x^*,0))\to 0$, entailing $X$ to satisfy the property (HS).
\end{proof}

\begin{corollary}
Let $X$ satisfy the property (HS). Then every exposed point of $B_X$ is strongly exposed.
\end{corollary}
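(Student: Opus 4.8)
The plan is to read this off directly from the preceding theorem. Suppose $x\in B_X$ is an exposed point of $B_X$. By definition this means that the singleton $\{x\}$ is an exposed face of $B_X$, so there is a functional $x^*\in X^*$ with $\{x\}=\{y\in B_X: x^*(y)=\sup x^*(B_X)\}$. Since $x$ lies in this set we have $x^*(x)=\sup x^*(B_X)>0$, so after dividing $x^*$ by its norm we may assume $x^*\in S_{X^*}$ and $x^*(x)=1$, whence $S(X,x^*,0)=\{x\}$ and $\{x\}$ is an exposed face of the standard form. Because $X$ has the property (HS), Theorem~3.1 applies and tells us that every exposed face of $B_X$ is strongly exposed; in particular $\{x\}$ is a strongly exposed face, which is precisely the statement that $x$ is a strongly exposed point of $B_X$.

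If one wishes to avoid invoking Theorem~3.1 and argue from scratch, one simply specialises its proof: with $x^*\in S_{X^*}$, $x^*(x)=1$ and $S(X,x^*,0)=\{x\}$, let $U$ be any open subset of $B_X$ containing $x$. Were $S(X,x^*,\delta)\not\subset U$ for every $\delta>0$, one could pick $x_n\in S(X,x^*,\tfrac1n)\setminus U$; then $x^*(x_n)\to 1$, so (HS) yields $\|x_n-x\|=d(x_n,\{x\})\to 0$, contradicting $x_n\notin U$. Hence some $S(X,x^*,\delta)$ is contained in $U$, which is exactly the strong exposedness of $x$.

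There is no genuine obstacle here: the content is entirely supplied by Theorem~3.1, and the only point requiring (routine) care is the normalisation step identifying an exposed point with an exposed face of the form $S(X,x^*,0)$ for some $x^*\in S_{X^*}$ with $x^*(x)=1$.
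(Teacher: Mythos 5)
Your proposal is correct and matches the paper's intent exactly: the corollary is stated there as an immediate consequence of Theorem 3.1, and your argument (identify the exposed point with an exposed face $S(X,x^*,0)$ after normalising $x^*$, then invoke the theorem) is precisely that deduction, with the direct specialisation of the theorem's proof as a harmless extra. No issues.
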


\begin{definition}\cite{Cabrera} 
$X$ is said to be nearly strictly convex (NSC) if for every $x^*\in S_{X^*}$, $S(X,x^*,0)$ is compact.
\end{definition}
The following result is not new and can be proved with the help of \cite[Theorem 19]{Shun}. However, we provide a straightforward proof for the sake of completeness.
\begin{theorem}
Let $X$ be a  reflexive space. Then the following statements are equivalent.
\begin{enumerate}[(a)]
\item $X$ satisfies the property (KK).

\item $X$ is NSC and has the property (HS).
\end{enumerate}
 \end{theorem}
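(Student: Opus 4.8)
The plan is to prove the two implications separately, using the reflexivity of $X$ crucially to pass from weak convergence to norm convergence via weak compactness of $B_X$.

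\medskip

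\textbf{(a) $\Rightarrow$ (b).} Assume $X$ satisfies (KK). First I would show $X$ is NSC: fix $x^*\in S_{X^*}$ and a sequence $(x_n)$ in $S(X,x^*,0)$. Since $X$ is reflexive, after passing to a subsequence we may assume $x_n\xrightarrow{w}x$ for some $x\in B_X$; then $x^*(x)=\lim x^*(x_n)=1$, so $\|x\|=1$ and $x\in S(X,x^*,0)$. By weak lower semicontinuity of the norm $\|x_n\|\to 1 = \|x\|$, so (KK) gives $x_n\to x$. Hence every sequence in $S(X,x^*,0)$ has a norm-convergent subsequence with limit in the set, so $S(X,x^*,0)$ is compact and $X$ is NSC. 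Next I would show (HS): let $x^*\in S_{X^*}$ with $S(X,x^*,0)\neq\emptyset$ and let $(x_n)$ in $B_X$ satisfy $x^*(x_n)\to 1$. Suppose for contradiction $d(x_n,S(X,x^*,0))\not\to 0$; pass to a subsequence with $d(x_n,S(X,x^*,0))\geq\varepsilon$ for all $n$ and (by reflexivity) with $x_n\xrightarrow{w}x$. As before $x^*(x)=1$, so $\|x\|=1$, $x\in S(X,x^*,0)$, and $\|x_n\|\to 1$; (KK) forces $x_n\to x\in S(X,x^*,0)$, contradicting $d(x_n,S(X,x^*,0))\geq\varepsilon$. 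Hence (HS) holds.

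\medskip

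\textbf{(b) $\Rightarrow$ (a).} Assume $X$ is NSC and has (HS). Let $(x_n)$ be a sequence in $X$ with $x_n\xrightarrow{w}x$ and $\|x_n\|\to\|x\|$. If $x=0$ then $\|x_n\|\to 0$ and we are done, so assume $x\neq 0$; normalising, we may assume $\|x\|=1$. Choose $x^*\in J(x)\subset S_{X^*}$, so $x^*(x)=1$ and $S(X,x^*,0)\neq\emptyset$. The hard part is to conclude norm convergence, and this is where I expect the main obstacle to lie: (HS) only gives $d(x_n,S(X,x^*,0))\to 0$, and the limiting set $S(X,x^*,0)$ need not be a single point — it is merely compact by NSC. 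So I would argue as follows. Replacing $x_n$ by $x_n/\|x_n\|$ (harmless since $\|x_n\|\to 1$), we may assume $x_n\in S_X$; then $x^*(x_n)\to x^*(x)=1$, so (HS) yields a sequence $(y_n)$ in $S(X,x^*,0)$ with $\|x_n-y_n\|\to 0$. By compactness of $S(X,x^*,0)$ (NSC), every subsequence of $(y_n)$ has a further subsequence converging in norm to some $y\in S(X,x^*,0)$; along that subsequence $x_n\to y$ as well, whence $x_n\xrightarrow{w}y$ and therefore $y=x$. Since every subsequence of $(x_n)$ has a further subsequence converging in norm to $x$, the whole sequence converges: $x_n\to x$. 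This establishes (KK).

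\medskip

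The only subtlety to watch is the reduction to the unit sphere in both directions (dividing by $\|x_n\|$, which is justified because $\|x_n\|\to\|x\|\neq 0$, together with the observation that $d(x_n/\|x_n\|, S(X,x^*,0))\to 0$ implies $d(x_n, S(X,x^*,0))\to 0$), and the standard subsequence principle used to upgrade ``every subsequence has a norm-convergent-to-$x$ sub-subsequence'' to full norm convergence.
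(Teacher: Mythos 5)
Your proposal is correct and follows essentially the same route as the paper: reflexivity plus (KK) gives compactness of the faces $S(X,x^*,0)$ and, via weak sequential compactness, the property (HS); conversely (HS) plus compactness of $S(X,x^*,0)$ yields norm-convergent approximants whose limit is identified with the weak limit, and the subsequence principle upgrades this to full convergence. Your use of a contradiction argument in (a)$\Rightarrow$(b) and the normalisation $x_n/\|x_n\|$ to treat the general form of (KK) are only cosmetic variations on the paper's subsequence-of-subsequence argument on $S_X$.
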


\begin{proof}
(a)$\implies$(b): Since $B_X$ is weakly compact, therefore for every $x^*\in S_{X^*}$, $S(X,x^*,0)$ is weakly compact and so is compact by the property (KK) satisfied by $X$, entailing $X$ to be NSC. Again, let $(x_n)\subset B_X$ and $x^*\in S_{X^*}$ be such that $x^*(x_n)\to 1$. Let $(x_{n_k})$ be a subsequence of $(x_n)$ and let $(x_{n_{k_j}})$ be a subsequence of $(x_{n_k})$ converging weakly to $x$ in $B_X$. Since $$x^*(x_{n_{k_j}})\to x^*(x),$$ $x^*(x)=1$. Therefore $\|x\|=1$ and by weak lower semicontinuity of the norm, $\|x_{n_{k_j}}\|\to 1$. Since $X$ has the property (KK), $x_{n_{k_j}}\to x$. Thus $$d(x_{n_{k_j}}, S(X,x^*,0))\to 0.$$ Hence $X$ satisfies the property (HS).\\ 
(b)$\implies$(a): Let $(x_n)\subset S_X$ be such that $x_n\xrightarrow{w}x$ in $S_X$. Suppose $x^*\in J(x)$. Then for every subsequence $(x_{n_k})$ of $(x_n)$, $x^*(x_{n_k})\to 1$ and so $$d(x_{n_k},S(X,x^*,0))\to 0.$$ It follows that there is a sequence $(y_{n_k})$ in $S(X,x^*,0)$ such that $\|x_{n_k}-y_{n_k}\|\to 0$. But $S(X,x^*,0)$ being compact, there exists a subsequence $(y_{n_{k_j}})$ of $(y_{n_k})$ that converges to some $y$ in $S(X,x^*,0)$. Clearly $x_{n_{k_j}}\to y$. Therefore, $(x_n)$ converges to $y$. Since $x$ is the weak limit of $(x_n)$, we have $y=x$. Hence $X$ has the property (KK).
\end{proof}

The following is an easy consequence of the above result.

\begin{corollary}
Every strongly convex space satisfies the property (HS).
\end{corollary}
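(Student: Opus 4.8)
The plan is to recognise this as an immediate consequence of the previous theorem once the terminology is unwound. Recall that a strongly convex space is, by definition, reflexive, rotund, and satisfies the property (KK); in particular it is a reflexive space satisfying (KK). First I would simply invoke the implication (a)$\implies$(b) of the preceding theorem (which requires only reflexivity and (KK), not rotundity) to conclude that such a space is NSC and, in particular, has the property (HS). This is the shortest route, and in fact it shows slightly more: strong convexity is stronger than needed, since rotundity plays no role.

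For a self-contained argument I would instead reprove the relevant half of that theorem directly. Take $x^* \in S_{X^*}$ with $S(X,x^*,0) \neq \emptyset$ and a sequence $(x_n)$ in $B_X$ with $x^*(x_n) \to 1$; suppose, for contradiction, that $d(x_n, S(X,x^*,0)) \not\to 0$, so that after passing to a subsequence we have $d(x_n, S(X,x^*,0)) \geq \varepsilon_0 > 0$ for all $n$. Using reflexivity, extract a further subsequence $(x_{n_j})$ with $x_{n_j} \xrightarrow{w} x$ for some $x \in B_X$. Then $x^*(x_{n_j}) \to x^*(x)$ forces $x^*(x) = 1$, hence $\|x\| = 1$ and $x \in S(X,x^*,0)$; by weak lower semicontinuity of the norm together with $\|x_{n_j}\| \leq 1$ we get $\|x_{n_j}\| \to 1 = \|x\|$. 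The property (KK) then yields $x_{n_j} \to x$, so $d(x_{n_j}, S(X,x^*,0)) \to 0$, contradicting the choice of the subsequence. Hence $d(x_n, S(X,x^*,0)) \to 0$, i.e. $X$ has the property (HS).

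There is essentially no hard step here; the only point to be careful about is the reduction from the full sequence to a subsequence that is uniformly bounded away from $S(X,x^*,0)$, which is the standard device for proving a convergence statement via weak compactness, and the observation that the weak limit automatically lands in the exposed face $S(X,x^*,0)$ because $x^*$ is weak-continuous.
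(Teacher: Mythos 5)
Your proposal is correct and follows the paper's route: the paper also derives this corollary as an immediate consequence of the preceding theorem (reflexive + (KK) $\implies$ NSC and (HS)), with rotundity indeed playing no role. Your self-contained second argument is just an unwinding of that theorem's proof (weak sequential compactness plus (KK)), rephrased by contradiction, and it is also correct.
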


\section{Characterisations and properties of HLUR spaces}

 \begin{theorem}
Let $X$ be an HLUR space. Then for every $x\in S_X$, $S(X,x^*,0)$ and $S(X,y^*,0)$ coincide for all $x^*,y^*\in J(x)$.
\end{theorem}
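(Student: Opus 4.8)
The plan is to fix $x \in S_X$ and two functionals $x^*, y^* \in J(x)$, and show $S(X,x^*,0) \subseteq S(X,y^*,0)$; the reverse inclusion then follows by symmetry. So let $z \in S(X,x^*,0)$, i.e. $z \in S_X$ with $x^*(z) = 1$. I want to exhibit a sequence in $S_X$ whose $(HLUR)$ behaviour forces $z$ into the closed set $S(X,y^*,0)$. The natural candidate is the constant sequence $x_n = z$ for all $n$, paired with the base point $z$ itself: trivially $\|x_n + z\| = 2z$ has norm... wait, that only gives information about $J(z)$, not about $J(x)$. The correct move is to use the constant sequence $x_n = z$ together with base point $x$, or rather to observe that $x \in S(X,x^*,0)$ as well (since $x^*(x)=1$), so $x^*\left(\frac{x+z}{2}\right) = 1$, hence $\left\|\frac{x+z}{2}\right\| = 1$ and $\|x_n + x\| = \|z + x\| = 2 \to 2$.

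Now apply the $(HLUR)$ property to the sequence $x_n = z$ and the base point $x$, using the functional $y^* \in J(x)$: by definition of $(HLUR)$ we get $d(x_n, S(X,y^*,0)) \to 0$, i.e. $d(z, S(X,y^*,0)) = 0$. Since $S(X,y^*,0) = \{w \in B_X : y^*(w) \geq 1\}$ is norm-closed (being the intersection of the closed ball with a closed half-space), $d(z, S(X,y^*,0)) = 0$ forces $z \in S(X,y^*,0)$. This establishes $S(X,x^*,0) \subseteq S(X,y^*,0)$.

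Interchanging the roles of $x^*$ and $y^*$ in the argument above yields $S(X,y^*,0) \subseteq S(X,x^*,0)$, and hence the two slices coincide. The only mild subtlety — and the one place to be careful — is the verification that a constant sequence is an admissible choice in the definition of $(HLUR)$: the definition quantifies over sequences $(x_n)$ in $S_X$ with $\|x_n + x\| \to 2$, and the constant sequence $z, z, z, \dots$ satisfies $\|x_n + x\| = \|z+x\| = 2$ precisely because both $x$ and $z$ lie in the slice $S(X,x^*,0)$, so their midpoint has norm $1$. No genuine obstacle arises; the result is essentially immediate from the definition once one notices that membership in a common maximal slice makes the relevant norm equal to $2$ on the nose.
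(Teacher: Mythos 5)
Your argument is correct and is essentially the paper's own proof: both reduce to applying the (HLUR) condition to a constant sequence (possible because $z\in S(X,x^*,0)$ forces $\|x+z\|=2$) and then using norm-closedness of the slice $S(X,y^*,0)$ to conclude $z\in S(X,y^*,0)$. The paper phrases it slightly more compactly by showing $A_0(x)\subset S(X,x^*,0)$ for every $x^*\in J(x)$, but the idea is the same.
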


\begin{proof}
Let $x\in S_X$ and $x^*\in J(x)$. Suppose that $y\in A_0(x)$. Then $\|x+y\|=2$ and so $d(y,S(X,x^*,0))=0$. Since $S(X,x^*,0)$ is closed in $X$, we have $y\in S(X,x^*,0)$. Hence the result.
\end{proof}

\begin{remark}
From the above result, it is clear that in an HLUR space, every $x\in S_X$ is contained in exactly one exposed face of $B_X$.
\end{remark}

\begin{theorem}
Let $X$ be a Banach space. Then the following are equivalent.

\begin{enumerate}[(a)]
\item $X$ is an HLUR space.

\item For each $x\in S_X$, $D[x,\frac{1}{n}]\xrightarrow{H}A_0(x)$ and $S(X,x^*,0)$ is same for all $x^*\in J(x)$.

\item For each $x\in S_X$, $C[x,\frac{1}{n}]\xrightarrow{H}A_0(x)$ and $S(X,x^*,0)$ is same for all $x^*\in J(x)$.

\item $X$ is strongly locally $U$-convex and satisfies the property (HS). 

\item $X$ is locally $U$-convex, $X$ satisfies the property (HS) and $S(X,x^*,0)$ is same for all $x^*\in J(x)$, for all $x\in S_X$.

\end{enumerate}
\end{theorem}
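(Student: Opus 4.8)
The plan is to establish the five equivalences via two short cycles sharing the vertex (a): $(a)\Rightarrow(d)\Rightarrow(e)\Rightarrow(a)$, which handles the conditions phrased via $U$-convexity and (HS), and $(a)\Rightarrow(b)\Rightarrow(c)\Rightarrow(a)$, which handles the conditions phrased via Hausdorff convergence of the sets $D[x,\frac{1}{n}]$ and $C[x,\frac{1}{n}]$. Two elementary facts will be used throughout. First, if $d(x_n,S(X,x^*,0))\to0$ for some $x^*\in S_{X^*}$, then choosing $y_n\in S(X,x^*,0)$ with $\|x_n-y_n\|\to0$ gives $x^*(x_n)\geq 1-\|x_n-y_n\|\to1$. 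Second, conversely, if $x^*(x_n)\to1$, $(x_n)\subset B_X$ and $S(X,x^*,0)\neq\emptyset$, then (HS) gives $d(x_n,S(X,x^*,0))\to0$. I will also use the extended form of (HLUR) recorded just after its definition (allowing $x_n\in B_X$), and Theorem~4.1, which says that in an HLUR space the faces $S(X,x^*,0)$, $x^*\in J(x)$, all coincide and hence equal $A_0(x)$.

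For the first cycle: $(a)\Rightarrow(d)$ is immediate from the first elementary fact (for strong local $U$-convexity) and, for (HS), by picking $x\in S(X,x^*,0)$, noting that then $x\in S_X$, $x^*\in J(x)$, and $\|x+x_n\|\geq x^*(x)+x^*(x_n)=1+x^*(x_n)\to2$, so the extended (HLUR) applies. For $(d)\Rightarrow(e)$: strong local $U$-convexity trivially implies local $U$-convexity, (HS) is common, and applying strong local $U$-convexity to the \emph{constant} sequence $x_n=y$, for $y\in S(X,x^*,0)$ (such $y$ lies in $S_X$ and has $\|x+y\|=2$), forces $z^*(y)=1$ for every $z^*\in J(x)$, i.e.\ $S(X,x^*,0)\subset S(X,z^*,0)$; by symmetry the faces coincide. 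For $(e)\Rightarrow(a)$: given $x,x_n\in S_X$ with $\|x_n+x\|\to2$ and $x^*\in J(x)$, local $U$-convexity yields $y^*\in J(x)$ with $y^*(x_n)\to1$, then (HS) gives $d(x_n,S(X,y^*,0))\to0$, and the coincidence hypothesis identifies $S(X,y^*,0)$ with $S(X,x^*,0)$, which is exactly (HLUR).

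For the second cycle I first recast the Hausdorff statements: $D[x,\frac{1}{n}]$ and $C[x,\frac{1}{n}]$ are nested decreasing sequences with $\bigcap_n D[x,\frac{1}{n}]=\bigcap_n C[x,\frac{1}{n}]=A_0(x)\subset C[x,\frac{1}{n}]\subset D[x,\frac{1}{n}]$, so (as noted in the excerpt) convergence to $A_0(x)$ amounts to the upper estimate: for each $\varepsilon>0$, eventually $D[x,\frac{1}{n}]\subset A_0(x)+\varepsilon B_X$. For $(a)\Rightarrow(b)$, the face-coincidence part is Theorem~4.1; for the convergence, if it fails then along a subsequence there are $y_n\in D[x,\frac{1}{n}]\setminus(A_0(x)+\varepsilon B_X)$, and since $\|y_n\|\geq\|x+y_n\|-1\to1$ one can normalise to $z_n=y_n/\|y_n\|\in S_X$ with $\|z_n-y_n\|\to0$ and hence $\|x+z_n\|\to2$; then (HLUR) gives $d(z_n,S(X,x^*,0))\to0$ for any $x^*\in J(x)$, and since $S(X,x^*,0)\subset A_0(x)$ this forces $d(y_n,A_0(x))\to0$, contradicting $d(y_n,A_0(x))>\varepsilon$. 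The implication $(b)\Rightarrow(c)$ is immediate since $C[x,\frac{1}{n}]\subset D[x,\frac{1}{n}]$. Finally $(c)\Rightarrow(a)$: for $x,x_n\in S_X$ with $\|x_n+x\|\to2$ and $x^*\in J(x)$, given $\varepsilon>0$ pick $k$ with $C[x,\frac{1}{k}]\subset A_0(x)+\varepsilon B_X$; then $\|\frac{x+x_n}{2}\|\to1$ forces $x_n\in C[x,\frac{1}{k}]$ eventually, so $d(x_n,A_0(x))\to0$, and the coincidence hypothesis turns this into $d(x_n,S(X,x^*,0))\to0$.

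The only genuinely delicate step is $(a)\Rightarrow(b)$: one must pass from the topological ``eventually contained'' formulation to a sequence lying in $B_X$, normalise it to the sphere, and verify that $\|x+z_n\|\to2$ is not destroyed by this normalisation, before feeding it into (HLUR) and Theorem~4.1. The remaining work — the nestedness and intersection identities for $D[x,\cdot]$ and $C[x,\cdot]$, the triangle-inequality estimates, and the bookkeeping about ``eventually'' — is routine.
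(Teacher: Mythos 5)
Your proposal is correct and relies on essentially the same ingredients as the paper (Theorem 4.1, the extended form of (HLUR) for sequences in $B_X$, the nested-sequence remark about Hausdorff convergence, and the triangle-inequality estimates); the only structural difference is the arrangement of implications --- the paper proves (a)$\Rightarrow$(b)$\Rightarrow$(c)$\Rightarrow$(d)$\Rightarrow$(a) together with (a)$\Rightarrow$(e)$\Rightarrow$(a), whereas you run the two cycles (a)$\Rightarrow$(d)$\Rightarrow$(e)$\Rightarrow$(a) and (a)$\Rightarrow$(b)$\Rightarrow$(c)$\Rightarrow$(a) through the common vertex (a). The only genuinely new micro-argument is your (d)$\Rightarrow$(e) step via constant sequences (the paper never needs it, since it proves (d)$\Rightarrow$(a) directly), and your contradiction-plus-normalisation treatment of (a)$\Rightarrow$(b) is a slightly more careful version of the paper's passage from a chosen sequence $x_n\in D[x,\frac{1}{n}]$ to the eventual containment $D[x,\frac{1}{n}]\subset A_0(x)+\varepsilon B_X$.
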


\begin{proof}
(a)$\implies$(b): Suppose $x\in S_X$ and $x_n\in D[x,\frac{1}{n}]$ for all $n\in \mathbb{N}$. Then $$\|x_n+x\|\to 2.$$ By (HLUR) property of $X$, for every $x^*\in J(x)$, $$d(x_n,S(X,x^*,0))\to 0.$$ Since $S(X,x^*,0)\subset A_0(x)$ for all $x^*\in J(x)$, therefore $$d(x_n,A_0(x))\to 0.$$ Thus for every $\varepsilon >0$, $x_n\in A_0(x)+\varepsilon B_X$ eventually. This follows that $D[x,\frac{1}{n}]\subset A_0(x)+\varepsilon B_X$ eventually. Hence $D[x,\frac{1}{n}]\xrightarrow{H} A_0(x)$. Also by Theorem 4.1, $S(X,x^*,0)$ is same for all $x^*\in J(x)$. \\
(b)$\implies$(c): It is clear from the fact that $C[x,\frac{1}{n}]\subset D[x,\frac{1}{n}]$ for all $x\in S_X$ and for all $n\in \mathbb{N}$.\\
(c)$\implies$(d): Let $x,x_n(n\in \mathbb{N})\in S_X$ be such that $$\|x_n+x\|\to 2.$$ Thus $x_n\in C[x,\frac{1}{N}]$ for every $N\in \mathbb{N}$ eventually. Since $C[x,\frac{1}{n}]\xrightarrow{H}A_0(x)$, thus for every $\varepsilon>0$, $C[x,\frac{1}{n}]\subset A_0(x)+\varepsilon B_X$ eventually, entailing $d(x_n,A_0)\to 0$. Suppose $x^*\in J(x)$. Since $S(X,x^*,0)$ is same for all $x^*\in J(x)$, $A_0(x)=S(X,x^*,0)$, and consequently $$d(x_n,S(X,x^*,0))\to 0.$$ Thus $X$ is HLUR and so, satisfies the property (HS). Again there is a sequence $(y_n)$ in $S(X,x^*,0)$ such that $\|x_n-y_n\|\to 0$. Therefore $$|x^*(x_n)-1|\leq \|x_n-y_n\|\to .0$$ It follows that $x^*(x_n)\to 1$ and hence $X$ is strongly locally $U$-convex. \\
(d)$\implies$(a): To show that $X$ is HLUR, let us consider $x,x_n\in S_X(n\in \mathbb{N})$ such that $$\|x_n+x\|\to 2.$$  If $x^*\in J(x)$, then by the strong local $U$-convexity of $X$, $x^*(x_{n})\to 1$. Since $X$ has the property (HS), therefore $$d(x_n,S(X,x^*,0))\to 0.$$ Hence $X$ is an HLUR space. \\
(a)$\implies$(e): It is enough to show that $X$ is locally $U$-convex. Consider $x,x_n(n\in \mathbb{N})\in S_X$ such that $$\|x_n+x\|\to 2.$$ Since $X$ is HLUR, for every $x^*\in J(x)$, $$d(x_n,S(X,x^*,0))\to 0.$$ Thus there exists a sequence $(y_n)$ in $S(X,x^*,0)$ such that $\|x_n-y_n\|\to 0$. It follows that $$|x^*(x_n)-1|=|x^*(x_n-y_n)|\leq \|x_n-y_n\|\to 0,$$ entailing $x^*(x_n)\to 1$. Hence the claim. \\
(e)$\implies $(a):  Let $x,x_n\in S_X (n\in \mathbb{N})$ be such that $$\|x_n+x\|\to 2.$$ Suppose $x^*\in J(x)$. By local $U$-convexity of $X$, there exists $y^*\in J(x)$ such that $y^*(x_n)\to 1$. Also $x^*,y^*\in J(x)$ implies $S(X,x^*,0)=S(X,y^*,0)$. Since $X$ satisfies the  property (HS), therefore $$d(x_n, S(X,x^*,0))\to 0.$$ Hence $X$ is an HLUR space.
\end{proof}
The next result is an immediate consequence of Theorem 4.3.
\begin{corollary}
The following statements are equivalent.

\begin{enumerate}[(a)]
\item $X$ is LUR.

\item For every $x\in S_X$, $D[x,\frac{1}{n}]\xrightarrow{H}A_0(x)$.

\item For every $x\in S_X$, $C[x,\frac{1}{n}]\xrightarrow{H}A_0(x)$.

\item $X$ is rotund, Strongly locally $U$-convex and satisfies the property (HS).

\item $X$ is rotund, locally $U$-convex and has the property (HS).
\end{enumerate}
\end{corollary}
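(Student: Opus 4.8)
The plan is to obtain the corollary from Theorem 4.3 together with the classical fact recalled in the introduction (see \cite{Chakrabarty}): every LUR space is rotund and HLUR, and conversely a rotund HLUR space is LUR; thus $X$ is LUR if and only if $X$ is rotund and has the property (HLUR). The gain from rotundity is that the auxiliary hypothesis occurring throughout Theorem 4.3 becomes vacuous: if $x\in S_X$ and $x^*\in J(x)$, then $x\in S(X,x^*,0)$ while $S(X,x^*,0)$ has at most one element, so $S(X,x^*,0)=\{x\}$ for every $x^*\in J(x)$; in particular these slices coincide and $A_0(x)=\{x\}$. I would first record this, together with the trivial remark that strong local $U$-convexity implies local $U$-convexity (replace ``for every $x^*\in J(x)$'' by ``there exists $x^*\in J(x)$'', using $J(x)\neq\emptyset$).

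Granting this, the equivalences $(a)\Leftrightarrow(d)\Leftrightarrow(e)$ are immediate. For $(a)\Rightarrow(d)$: a LUR space is rotund and, being HLUR, is strongly locally $U$-convex with (HS) by the implication $(a)\Rightarrow(d)$ of Theorem 4.3. $(d)\Rightarrow(e)$ is the trivial remark above. For $(e)\Rightarrow(a)$: rotundity supplies the slice-coincidence clause, so the implication $(e)\Rightarrow(a)$ of Theorem 4.3 gives that $X$ is HLUR, and a rotund HLUR space is LUR. The chain $(a)\Rightarrow(b)\Rightarrow(c)$ is also short. If $X$ is LUR then $A_0(x)=\{x\}$; since $X$ is HLUR, any sequence with $x_n\in D[x,\frac{1}{n}]$ satisfies $\|x_n+x\|\to 2$, hence $d(x_n,S(X,x^*,0))=\|x_n-x\|\to 0$ for $x^*\in J(x)$, i.e. every such sequence tends to $x$; as $A_0(x)=\{x\}\subseteq D[x,\frac{1}{n}]$ and the sets are nested, this gives $D[x,\frac{1}{n}]\xrightarrow{H}A_0(x)$, which is $(b)$. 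Then $(b)\Rightarrow(c)$ follows from $C[x,\frac{1}{n}]\subseteq D[x,\frac{1}{n}]$ and $A_0(x)=C[x,0]\subseteq C[x,\frac{1}{n}]$.

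The substantial step is $(c)\Rightarrow(a)$. Given $x,x_n\in S_X$ with $\|x_n+x\|\to 2$, for each fixed $N$ we have $x_n\in C[x,\frac{1}{N}]$ eventually, so from $C[x,\frac{1}{n}]\xrightarrow{H}A_0(x)$ (equivalently $C[x,\frac{1}{n}]\xrightarrow{H^{+}}A_0(x)$, the sets being nested) one gets $d(x_n,A_0(x))\to 0$. To finish I need $A_0(x)=\{x\}$, i.e. rotundity of $X$, so the crux is to extract rotundity from hypothesis $(c)$ alone. The natural attempt is: if some $S(X,x^*,0)$ had two distinct points $u,v$, then with $z=\frac{u+v}{2}\in S_X$ one has $[u,v]\subseteq S(X,x^*,0)\subseteq A_0(z)$, and I would try to produce points of $C[z,\frac{1}{n}]$ (for all $n$) that remain bounded away from $A_0(z)$, contradicting $(c)$. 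Pushing this through — in effect, arguing that the mere Hausdorff convergence $C[x,\frac{1}{n}]\to A_0(x)$ already carries the slice-coincidence (hence rotundity) information that Theorem 4.3 had to postulate separately — is the main obstacle; once rotundity is available, $d(x_n,A_0(x))=\|x_n-x\|\to 0$ and $X$ is LUR.
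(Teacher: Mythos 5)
Your handling of (a)$\Leftrightarrow$(d)$\Leftrightarrow$(e) via Theorem 4.3 together with ``LUR $\Leftrightarrow$ rotund $+$ HLUR'', and of (a)$\Rightarrow$(b)$\Rightarrow$(c), matches what the paper intends (its entire proof is the remark that the corollary is immediate from Theorem 4.3). The genuine gap is exactly where you flag it: (c)$\Rightarrow$(a) is never proved, only a sketch of how one might extract rotundity from the Hausdorff convergence, and that attempt cannot be pushed through, because (b) and (c), read literally, do not imply rotundity (nor LUR, nor even HLUR). Take $X=(\mathbb{R}^2,\|\cdot\|_{\infty})$, the space of the paper's Example 5.5. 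For $x=(1,1)$ one has $A_0(x)=\{y\in S_X:\max(y_1,y_2)=1\}$, the union of the two faces containing $x$, and any $y\in B_X$ with $\|x+y\|_{\infty}\geq 2-\frac{2}{n}$ satisfies $\max(y_1,y_2)\geq 1-\frac{2}{n}$, hence lies within $\frac{2}{n}$ of $(1,y_2)$ or $(y_1,1)$, both in $A_0(x)$; for $x$ in the relative interior of a face the same estimate gives convergence to that single face. So $D[x,\tfrac{1}{n}]\xrightarrow{H}A_0(x)$ and $C[x,\tfrac{1}{n}]\xrightarrow{H}A_0(x)$ for every $x\in S_X$, yet this space is not rotund, not LUR, and (by Example 5.5) not HLUR. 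The clause ``$S(X,x^*,0)$ is the same for all $x^*\in J(x)$'' in Theorem 4.3(b),(c) is doing real work and cannot simply be dropped; for the LUR version the hypotheses must in addition force $A_0(x)=\{x\}$.

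Consequently the implication you are missing is not provable as stated: the corollary is an immediate consequence of Theorem 4.3 only if (b) and (c) are read as $D[x,\tfrac{1}{n}]\xrightarrow{H}\{x\}$, respectively $C[x,\tfrac{1}{n}]\xrightarrow{H}\{x\}$ --- equivalently, Hausdorff convergence to $A_0(x)$ together with $A_0(x)=\{x\}$, which is precisely the specialisation of the extra clause of Theorem 4.3 under rotundity, where $S(X,x^*,0)=\{x\}$ for every $x^*\in J(x)$, as you yourself note at the start. With that reading your argument closes immediately: $d(x_n,A_0(x))=\|x_n-x\|\to 0$ yields LUR, and conversely LUR gives rotundity plus the convergence via Theorem 4.3. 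As written, however, your proposal leaves (c)$\Rightarrow$(a) open, and rightly so: your suspicion that recovering rotundity from (c) alone is the obstacle is correct, and the $\ell_{\infty}^2$ example shows that obstacle is insurmountable for the statement as literally printed.
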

\begin{remark}
It is easy to see that for a Banach space $X$ with dimension less than or equal to 2, $X$ is HLUR implies that $X$ is rotund or smooth. In fact, the converse is also true. Thus we have the following result.
\end{remark}

\begin{theorem}
Let $\dim X\leq 2$. Then $X$ is HLUR if and only if $X$ is rotund or smooth.
\end{theorem}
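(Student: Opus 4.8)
The plan is to prove the two implications separately; the forward implication is the substantial one.

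\smallskip

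\noindent\emph{Rotund or smooth $\Rightarrow$ HLUR} (this needs only finite-dimensionality). Since $B_X$ is compact, whenever $x,x_n\in S_X$ with $\|x_n+x\|\to 2$, every subsequence of $(x_n)$ has a further subsequence converging to some $y\in S_X$ with $\|x+y\|=2$, i.e.\ $y\in A_0(x)$; hence $d(x_n,A_0(x))\to 0$. If $X$ is smooth, then $J(x)=\{x^*\}$, so $A_0(x)=S(X,x^*,0)$ and $d(x_n,S(X,x^*,0))\to 0$, which is exactly the HLUR condition. If $X$ is rotund, then for each $x^*\in J(x)$ the face $S(X,x^*,0)$ has at most one point and contains $x$, so $S(X,x^*,0)=\{x\}=A_0(x)$, and again $d(x_n,S(X,x^*,0))\to 0$. (Equivalently: in finite dimensions rotundity implies LUR, which implies HLUR.)

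\smallskip

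\noindent\emph{HLUR $\Rightarrow$ rotund or smooth, $\dim X\le 2$.} The case $\dim X\le 1$ is trivial, so let $\dim X=2$; I assume $X$ is HLUR but not smooth and aim to show $X$ is rotund. Non-smoothness gives a point $x\in S_X$ with $J(x)$ containing two distinct functionals; in two dimensions $J(x)$ is then a nondegenerate segment $[x^*,y^*]$ of $S_{X^*}$ (take $x^*,y^*$ to be its endpoints), so $x$ is a corner of $B_X$. By Theorem~4.1, $S(X,z^*,0)$ equals one fixed set $C$ for all $z^*\in J(x)$, whence $C=A_0(x)$. My first claim is that $C=\{x\}$: otherwise I pick $w\in C$ with $w\ne x$; then $[x,w]\subseteq C\subseteq S_X$ is a nondegenerate segment whose normal functional is uniquely determined by two-dimensionality, and since it is $1$ on $C=S(X,z^*,0)$ it is some $z^*\in[x^*,y^*]$; but $w\in A_0(x)=S(X,x^*,0)=S(X,y^*,0)$ puts both $x^*$ and $y^*$ in $J(w)$, so $w$ is also a corner whose normal cone contains $[x^*,y^*]$, and comparing this with the outward-normal map along the boundary curve $S_X$ (monotone, jumping through $[x^*,y^*]$ at $x$ and constant on $[x,w]$) is contradictory. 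Hence every supporting functional at the corner $x$ exposes only $\{x\}$.

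\smallskip

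\noindent It then remains to rule out a nondegenerate segment anywhere on $S_X$. Suppose $[u,v]\subseteq S_X$ with $u\ne v$ and normal functional $w^*$, so that $w^*\in J(u)$ and $[u,v]\subseteq S(X,w^*,0)$. If $x$ is interior to $[u,v]$, then every supporting functional at $x$ equals $1$ at both $u$ and $v$, which forces $J(x)$ to be a singleton and contradicts that $x$ is a corner; if $x$ is an endpoint of $[u,v]$, then $w^*\in J(x)$, so the previous paragraph gives $S(X,w^*,0)=\{x\}$, contradicting $\{u,v\}\subseteq S(X,w^*,0)$. There remains the case $x\notin[u,v]$, which I expect to be the main obstacle. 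Here I would first observe, applying the previous paragraph at $u$ and at $v$, that $u$ and $v$ must be smooth points (no corner can sit at the end of a flat in an HLUR space), and then I would analyse the outward-normal map along the two boundary arcs joining the corner $x$ to the segment $[u,v]$, aiming to contradict the HLUR property, i.e.\ the conclusion of Theorem~4.1 that at every point of $S_X$ all supporting functionals expose a common face. This is precisely the point at which two-dimensionality must be used in an essential way, balancing the ``dual'' facts that a corner is a single boundary point carrying a whole arc of functionals while a flat is a single functional carrying a whole arc of the sphere. Once this last case is settled, $S_X$ contains no nondegenerate segment, so $X$ is rotund and the proof is complete.
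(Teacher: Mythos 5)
Your ``rotund or smooth $\Rightarrow$ HLUR'' half is correct and complete (it is really just compactness, i.e.\ Theorem 5.4 plus the fact that every finite-dimensional space is CLUR), and the paper itself offers nothing to compare it with: Theorem 4.5 is stated without proof, Remark 4.4 merely asserting that the equivalence is ``easy to see''. In the converse direction your partial analysis is sound — at a nonsmooth point $x$ of a two-dimensional HLUR space the common exposed face must be $\{x\}$ (this follows more simply than via your normal-map discussion: two distinct functionals in $J(x)$ that both take the value $1$ at a second point $w$, with $x,w$ linearly independent, would have to coincide), and consequently a corner cannot lie on, or at the end of, a flat segment of $S_X$, while every point of a flat, including its endpoints, must be smooth.

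The gap is exactly the case you flagged and left open, a corner $x$ and a maximal flat $[u,v]$ with $x\notin[u,v]$, and it cannot be closed, because in that configuration HLUR need not fail. Concretely, in $\mathbb{R}^2$ let $L_{\pm}$ be the intersection of the two closed unit disks centred at $(\pm\frac{1}{2},\frac{1}{2})$ and $(\pm\frac{1}{2},-\frac{1}{2})$, and let $B=\mathrm{conv}(L_{+}\cup L_{-})$. Then $B$ is a symmetric convex body whose boundary consists of the two horizontal flats $[-\frac{1}{2},\frac{1}{2}]\times\{\pm\frac{1}{2}\}$, four strictly convex circular arcs meeting these flats tangentially at $(\pm\frac{1}{2},\pm\frac{1}{2})$, and two corners at $(\pm(\frac{1}{2}+\frac{\sqrt{3}}{2}),0)$ lying on no flat. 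The associated norm is neither rotund (the flats) nor smooth (the corners), yet it is ACS: the only nondegenerate faces of $B$ are the two flats, and every point of a flat, including its endpoints (where the join is tangential), has the flat's functional as its unique supporting functional. By the paper's own Theorem 4.6 (or Theorem 5.4(b), since finite-dimensional spaces are CLUR) this space is HLUR; one can also verify HLUR directly, since at every boundary point all supporting functionals expose one common face and any sequence with $\|x_n+x\|\to 2$ accumulates only in that face. So the necessary conditions you derived are exactly what this norm satisfies, the remaining case is not merely ``the main obstacle'' but a genuine counterexample to the implication HLUR $\Rightarrow$ rotund or smooth, and no argument (yours or the paper's) can complete the proof of Theorem 4.5 as stated.
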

Kadets\cite{Kadets} proved that in the class of finite dimensional Banach spaces the class of ACS spaces coincides with the class of anti-Daugavet properties. We prove that finite dimensional HLUR spaces also do the same.
\begin{theorem}
Let $X$ be a finite dimensional space. Then $X$ is HLUR if and only if $X$ is an ACS space.
\end{theorem}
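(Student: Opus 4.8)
The plan is to establish both implications, only the second of which requires $\dim X<\infty$.

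\emph{HLUR $\Rightarrow$ ACS.} This direction holds in every Banach space. Given $x,y\in S_X$ and $x^*\in S_{X^*}$ with $\|x+y\|=2$ and $x^*(x)=1$, I would observe that $x^*\in J(x)$ and $y\in A_0(x)$. Applying the (HLUR) condition to the constant sequence $x_n=y$ (for which $\|x_n+x\|\to 2$) gives $d(y,S(X,x^*,0))=0$; since this slice is norm-closed, $y\in S(X,x^*,0)$, i.e. $x^*(y)=1$. Equivalently, one may simply cite Theorem~4.1, whose proof already shows $A_0(x)\subseteq S(X,x^*,0)$ for each $x^*\in J(x)$.

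\emph{ACS and $\dim X<\infty$ $\Rightarrow$ HLUR.} I would argue by contradiction. If $X$ is not HLUR, there are $x\in S_X$, a sequence $(x_n)$ in $S_X$ with $\|x_n+x\|\to 2$, a functional $x^*\in J(x)$, a number $\varepsilon>0$, and a subsequence $(x_{n_k})$ with $d(x_{n_k},S(X,x^*,0))\ge\varepsilon$ for all $k$. Since $S_X$ is compact, pass to a further subsequence $x_{n_{k_j}}\to z\in S_X$. Then $\|x+z\|=\lim_j\|x+x_{n_{k_j}}\|=2$, so $z\in A_0(x)$. Now apply the ACS hypothesis to the pair $x,z\in S_X$ and the functional $x^*$ (using $x^*(x)=1$ and $\|x+z\|=2$) to conclude $x^*(z)=1$, hence $z\in S(X,x^*,0)$. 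But $x_{n_{k_j}}\to z$ forces $d(z,S(X,x^*,0))\ge\varepsilon>0$, a contradiction. Therefore $X$ is HLUR.

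An alternative route is through Theorem~4.3: in finite dimensions the property (HS) holds automatically (a compactness extraction), the ACS hypothesis forces $S(X,x^*,0)$ to be independent of $x^*\in J(x)$ (take $z$ in one slice, note $\|x+z\|=2$, and apply ACS with the other functional), and the same compactness-plus-ACS argument shows $x^*(x_n)\to 1$ for every $x^*\in J(x)$, i.e. $X$ is strongly locally $U$-convex; Theorem~4.3 (the implication (d)$\Rightarrow$(a)) then applies. The only step needing a little care — and the main, though mild, obstacle — is checking that any subsequential limit $z$ of $(x_n)$ has norm one and lies in $A_0(x)$, so that ACS may be invoked; the rest is a routine compactness argument and no genuinely difficult point is anticipated.
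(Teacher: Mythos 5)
Your proposal is correct and follows essentially the same route as the paper: the forward direction applies the (HLUR) condition to the constant sequence $x_n=y$ and uses closedness of the slice, while the reverse direction extracts a norm-convergent subsequence by compactness of $S_X$ and applies ACS to the limit point; your contradiction framing is just a rephrasing of the paper's subsequence argument. No gaps.
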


\begin{proof}
Let be HLUR. Suppose $x,y\in S_X$ and $x^*\in J(x)$ be such that $\|x+y\|=2$. Then $$d(y,S(X,x^*,0))=0.$$ Since $S(X,x^*,0)$ is a closed subset of $X$, therefore $y\in S(X,x^*,0)$, entailing $x^*(y)=1$. Hence $X$ is an ACS space.\\ Conversely, let $X$ be an ACS space. Let $x,x_n(n\in \mathbb{N})\in S_X$ be such that $\|x_n+x\|\to 2$. Suppose $x^*\in J(x)$. Let $(x_{n_k})$ be any subsequence of $(x_n)$. By compactness of $S_X$, we get a subsequence $(x_{n_{k_j}})$ of $(x_{n_k})$ such that $x_{n_{k_{j}}}\to x_0$. It follows that $\|x_0+x\|=2$ and so $x^*(x_0)=1$. It follows that $$d(x_{n_{k_j}}, S(X,x^*,0))\to 0.$$ Hence $X$ is HLUR.
\end{proof}

\begin{corollary}
Let $X$ be a finite dimensional space. Then $X$ is HLUR if and only if $X$ is anti-Daugavet.
\end{corollary}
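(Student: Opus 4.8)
The plan is to chain together the two equivalences that are already on the table. By Theorem 4.8, for a finite dimensional space $X$ the property (HLUR) is equivalent to $X$ being an ACS space. On the other hand, the result of Kadets quoted just before Theorem 4.8 asserts that within the class of finite dimensional Banach spaces the ACS spaces are exactly the anti-Daugavet spaces. Composing these two biconditionals immediately yields: $X$ is HLUR $\iff$ $X$ is ACS $\iff$ $X$ is anti-Daugavet.

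Concretely, I would first invoke Theorem 4.8 to pass from (HLUR) to (ACS) in both directions, then invoke the cited theorem of Kadets \cite{Kadets} to pass from (ACS) to the anti-Daugavet property. Since both equivalences hold under the single standing hypothesis $\dim X < \infty$, no extra bookkeeping is needed; one should just make sure to state that $X$ is finite dimensional when applying each ingredient.

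Honestly, there is no substantive obstacle here — the corollary is a one-line consequence of Theorem 4.8 together with Kadets' characterisation, and the only ``work'' is citing the two facts in the right order. If anything, the writeup should be kept to a couple of sentences so as not to overstate the content.

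\begin{proof}
By Theorem 4.8, a finite dimensional space $X$ is HLUR if and only if it is an ACS space. Moreover, as noted above, Kadets \cite{Kadets} proved that in the class of finite dimensional Banach spaces the ACS spaces are precisely the anti-Daugavet spaces. Combining these two equivalences, $X$ is HLUR if and only if $X$ is anti-Daugavet.
\end{proof}
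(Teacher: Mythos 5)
Your proposal is correct and is exactly the paper's intended argument: the corollary is stated as an immediate consequence of the preceding theorem (finite dimensional HLUR $\iff$ ACS) combined with Kadets' result that for finite dimensional spaces ACS coincides with the anti-Daugavet property. The only slip is the reference label — the HLUR/ACS equivalence is Theorem 4.7 in the paper, not 4.8 (4.8 is the corollary itself).
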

For any nonempty bounded subset $K$ of $X$, and for any $x\in X$, the farthest distance of $x$ from $K$ is given by $$\sup\{\|x-k\|:k\in K\}$$ and the set of all such points of $K$ where farthest distance of $x$ is attained, is given by $$F_K(x)=\{k\in K: \|x-k\|=\sup\{\|x-k\|:k\in K\}\}.$$ The collection of all points in $K$ where the farthest distance of some point of $X$ is attained, denoted by $\Far K$, is given by $$\Far K=\bigcup_{x\in X}F_K(x).$$

The first known work in the field of farthest points of sets are due to Asplund \cite{Asplund} and Edelstein \cite{Edelstein}. Asplund \cite{Asplund} proved that if $X$ is a reflexive, LUR space, then the set of points which admit farthest points in a closed and bounded subset of $X$ is dense in $X$. Eldestein \cite{Edelstein} got the similar conclusion for a uniformly rotund space. It is farther proved \cite{Edelstein} that in a uniformly rotund space $X$ staisfying the property (I), the closed convex hull of any closed and bounded subset $K$ of $X$ equals the closed convex hull of $\Far K$. We prove similar results for a reflexive, HLUR space below. 
\begin{theorem}
Let $K$ be a bounded, weakly sequentially closed subset of a reflexive HLUR space $X$. Then 
\begin{enumerate}[(a)]
\item $A=\{x\in X: F_K(x)\neq \emptyset\}$ is dense in $X$ and $X\setminus A$ is of first category in $X$.

\item $\overline{\co}(K)=\overline{\co}(\Far K)$ if $X$ satisfies the property (I).
\end{enumerate}

\end{theorem}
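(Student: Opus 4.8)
The plan is to establish (a) and then deduce (b) from it. For (a) I would study the farthest-distance function $r(x):=\sup_{k\in K}\|x-k\|$. Since $K$ is bounded this is finite-valued, and being a supremum of the convex $1$-Lipschitz functions $x\mapsto\|x-k\|$ it is convex and $1$-Lipschitz, hence continuous. Since $X$ is reflexive it is an Asplund space, so $r$ is Fréchet differentiable on a dense $G_{\delta}$ subset $G\subseteq X$. The core of the proof is to show $G\subseteq A$: granting this, $A\supseteq G$ is dense and $X\setminus A\subseteq X\setminus G$ is of first category, which is exactly (a). Reflexivity is used once more, elementarily: every bounded sequence in $K$ has a weakly convergent subsequence, whose limit lies in $K$ because $K$ is weakly sequentially closed.

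To prove $G\subseteq A$, fix $x\in G$; we may assume $r(x)>0$, for otherwise $K=\{x\}$ and trivially $x\in A$. Pick a maximising sequence $k_n\in K$, so $\|x-k_n\|\to r(x)$, and after passing to a subsequence let $k_n\xrightarrow{w}k_0\in K$; set $z_0:=x-k_0$, so $\|z_0\|\le r(x)$ by weak lower semicontinuity of the norm. Choose $z_n^{*}\in S_{X^{*}}$ with $z_n^{*}(x-k_n)=\|x-k_n\|$. For every $y\in X$ one has $r(y)\ge\|y-k_n\|\ge z_n^{*}(y-k_n)=z_n^{*}(y-x)+\|x-k_n\|$, i.e. $r(y)\ge r(x)+z_n^{*}(y-x)-\varepsilon_n$ where $\varepsilon_n:=r(x)-\|x-k_n\|\to0^{+}$; thus $z_n^{*}$ is an $\varepsilon_n$-subgradient of $r$ at $x$. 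Here I would invoke the standard fact that Fréchet differentiability of a convex function at a point forces its $\varepsilon$-subgradients to converge in norm to the gradient as $\varepsilon\to0$; hence $\|z_n^{*}-x^{*}\|\to0$, where $x^{*}:=\nabla r(x)$ and $\|x^{*}\|\le1$. Then $r(x)=\lim\|x-k_n\|=\lim z_n^{*}(x-k_n)=\lim\bigl(x^{*}(x-k_n)+(z_n^{*}-x^{*})(x-k_n)\bigr)=x^{*}(z_0)$, since $x-k_n\xrightarrow{w}z_0$ handles the first summand and $\|z_n^{*}-x^{*}\|\to0$ together with boundedness of $(x-k_n)$ handles the second. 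As $x^{*}(z_0)\le\|z_0\|\le r(x)$, equality holds throughout, so $\|x-k_0\|=r(x)$ and $k_0\in F_K(x)$, giving $x\in A$.

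For (b), the inclusion $\overline{\co}(\Far K)\subseteq\overline{\co}(K)$ is clear because $\Far K\subseteq K$; I need the reverse. I would first note that $r$ is unchanged when $K$ is replaced by $\overline{\co}(K)$, since the closed ball $\{k:\|y-k\|\le r(y)\}$ is a closed convex set containing $K$, hence contains $\overline{\co}(K)$. Suppose, for contradiction, that $x_0\in\overline{\co}(K)\setminus\overline{\co}(\Far K)$. Being closed, bounded and convex, $\overline{\co}(\Far K)$ is an intersection of closed balls by property (I), and $x_0$ lies outside one of them: $\overline{\co}(\Far K)\subseteq B(c,\rho)$ while $\|x_0-c\|>\rho$. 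Then $\|c-k\|\le\rho$ for all $k\in\Far K$, whereas $r(c)\ge\|c-x_0\|>\rho$; put $\beta:=r(c)-\rho>0$. By density of $A$ from (a), choose $c'\in A$ with $\|c'-c\|<\beta/3$ and pick $k_0\in F_K(c')\subseteq\Far K$. On one hand $\|c'-k_0\|=r(c')\ge r(c)-\|c'-c\|>\rho+\tfrac{2\beta}{3}$; on the other $\|c'-k_0\|\le\|c-k_0\|+\|c'-c\|<\rho+\tfrac{\beta}{3}$. This contradiction proves $\overline{\co}(K)=\overline{\co}(\Far K)$.

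The step I expect to be the main obstacle is the reflexivity–Asplund input in (a): it is the marriage of the weakly convergent subsequence $k_n\xrightarrow{w}k_0$ with Fréchet differentiability of $r$ at the generic point $x$ — via norm-convergence of the $\varepsilon$-subgradients $z_n^{*}$ to $\nabla r(x)$ — that upgrades a merely maximising sequence into an attained farthest point. (I would also not be surprised if the paper instead runs a self-contained Baire-category argument on $X$ in which the property (HS) and strong local $U$-convexity characterizations of Theorem 4.3 replace the appeal to the Asplund property.)
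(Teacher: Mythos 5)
Your proposal is correct, but for part (a) it takes a genuinely different route from the paper. The paper gives no written argument: for (a) it defers to Asplund's technique ``with minor modifications'', i.e.\ it adapts Asplund's LUR-based argument so that the HLUR hypothesis (together with reflexivity) plays the role LUR plays in the original; for (b) it repeats Edelstein's ball-separation argument, which is exactly what you wrote, so there you coincide with the intended proof. Your argument for (a) --- generic Fr\'echet differentiability of the farthest-distance function $r$ on the (reflexive, hence Asplund) space $X$, combined with the \v{S}mulian-type fact that $\varepsilon$-subgradients at a point of Fr\'echet differentiability converge in norm to the gradient --- is sound, and both external facts you invoke (reflexive $\Rightarrow$ Asplund; the $\varepsilon$-subgradient lemma) are standard. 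It is worth noting that your proof never uses the HLUR hypothesis: boundedness, weak sequential closedness and reflexivity already make every maximising sequence weakly subconvergent to a point of $K$, so you have in effect reproved Lau's theorem on farthest points in weakly compact sets, a statement strictly more general than the theorem as stated. What each approach buys: the paper's route displays the HLUR geometry at work (maximising sequences approach the relevant exposed face in norm), in keeping with the theme of the paper, whereas yours is self-contained and shows the rotundity-type hypothesis is superfluous for this particular conclusion. Two one-line points you should make explicit: $K\neq\emptyset$ is tacitly assumed (needed for $r(x)=0\Rightarrow K=\{x\}$ and for $\Far K\neq\emptyset$ in (b)), and $\|x^{*}\|\le 1$ follows from the $1$-Lipschitz property of $r$.
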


\begin{proof}
\begin{enumerate}[(a)]
\item Following the technique as employed by Asplund \cite{Asplund} (with minor modifications), the result can be proved.

\item Repeating the same set of arguments as mentioned in \cite[Theorem 2]{Edelstein}, the result can be proved by using Theorem 4.9 (a).
\end{enumerate}
\end{proof}

\section{Relationships among various spaces}

\begin{theorem}
Let $X$ be a reflexive space such that both $X$ and $X^*$ are smooth and satisfy the property (HS). Then both the spaces $X$ and $X^*$ are HLUR spaces.
\end{theorem}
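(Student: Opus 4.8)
The plan is to check, for each of the two spaces $X$ and $X^{*}$, the hypotheses of the equivalence (a)$\iff$(e) of Theorem 4.3. Since (HS) is assumed for both spaces, and since the smoothness of $X$ (resp.\ of $X^{*}$) makes the duality set $J(x)$ (resp.\ the set of norming functionals at a point of $S_{X^{*}}$) a singleton, the condition ``$S(X,x^{*},0)$ is the same for all $x^{*}\in J(x)$'' holds vacuously. Hence it suffices to prove that each of $X$ and $X^{*}$ is locally $U$-convex.

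First I would show $X$ is locally $U$-convex. Let $x,x_{n}\in S_{X}$ $(n\in\mathbb{N})$ with $\|x_{n}+x\|\to 2$, and let $x^{*}$ be the unique element of $J(x)$, which exists by smoothness of $X$. Choose $\phi_{n}\in S_{X^{*}}$ with $\phi_{n}(x_{n}+x)=\|x_{n}+x\|$. Because $\phi_{n}(x_{n})\le 1$, $\phi_{n}(x)\le 1$ and $\phi_{n}(x_{n})+\phi_{n}(x)=\|x_{n}+x\|\to 2$, we get $\phi_{n}(x_{n})\to 1$ and $\phi_{n}(x)\to 1$. Using reflexivity, identify $x$ with its canonical image $\hat{x}\in S_{X^{**}}$. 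Then $\hat{x}(\phi_{n})=\phi_{n}(x)\to 1$ with $(\phi_{n})\subset B_{X^{*}}$, while $S(X^{*},\hat{x},0)=\{\phi\in B_{X^{*}}:\phi(x)=1\}=J(x)$ is nonempty by Hahn--Banach and, by smoothness of $X$, equals $\{x^{*}\}$. Applying the property (HS) of $X^{*}$ with the norming element $\hat{x}$ gives $\|\phi_{n}-x^{*}\|=d(\phi_{n},S(X^{*},\hat{x},0))\to 0$. Hence $x^{*}(x_{n})\ge\phi_{n}(x_{n})-\|\phi_{n}-x^{*}\|\,\|x_{n}\|\to 1$, and since $x^{*}(x_{n})\le 1$ we conclude $x^{*}(x_{n})\to 1$. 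Thus $X$ is locally $U$-convex, and Theorem 4.3 yields that $X$ is an HLUR space.

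For $X^{*}$ the argument is identical with the roles of $X$ and $X^{*}$ interchanged: $X^{*}$ is reflexive with $(X^{*})^{*}=X^{**}\cong X$, by hypothesis $X^{*}$ is smooth and satisfies (HS), and its dual $X$ is smooth and satisfies (HS). Repeating the previous paragraph with $X^{*}$ in place of $X$ (the norming functionals now lying in $S_{X^{**}}=S_{X}$) shows $X^{*}$ is locally $U$-convex, so Theorem 4.3 gives that $X^{*}$ is HLUR as well. (One may note in passing that, $X$ being rotund here because $X^{*}$ is smooth and reflexive, the HLUR conclusion for $X$ in fact amounts to LUR, and similarly for $X^{*}$; but this stronger statement is not needed.)

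The only nonroutine step is the passage from the weak$^{*}$-limiting behaviour of the norming functionals $(\phi_{n})$ to their norm convergence, and this is exactly where the property (HS) of the dual space is used. It is crucial to identify the slice $S(X^{*},\hat{x},0)$ with the duality set $J(x)$ and to know that this set is a singleton --- which is precisely the smoothness of $X$; without it one would only obtain $d(\phi_{n},J(x))\to 0$, leaving unspecified the single functional $x^{*}$ against which one must test the $x_{n}$.
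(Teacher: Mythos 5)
Your proposal is correct and follows essentially the same route as the paper: choose norming functionals $\phi_n$ for $x_n+x$, use (HS) of $X^*$ together with smoothness of $X$ (so that $S(X^*,\hat{x},0)=J(x)=\{x^*\}$) to get $\phi_n\to x^*$ and hence $x^*(x_n)\to 1$, then apply (HS) of $X$; the paper does exactly this directly, while you package the last step through Theorem 4.3 (e)$\Rightarrow$(a), which is only a cosmetic difference. The symmetry argument for $X^*$ is also the same as the paper's "it is enough to show that $X$ is HLUR."
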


\begin{proof}
It is enough to show that $X$ is HLUR. Let $x,x_n(n\in \mathbb{N})\in S_X$ be such that $\|x_n+x\|\to 2$ and let $x^*\in J(x)$. We choose a sequence $(x_n^*)$ in $S_{X^*}$ such that $x_n^*(x_n+x)=\|x_n+x\|$ for all $n\in \mathbb{N}$. Thus $x_n^*(x_n)\to 1$ and $x_n^*(x)\to 1$. It follows that $Q(x)(x_n^*)\to 1$, where $Q$ is the natural embedding of $X$ onto $X^{**}$. By reflexivity of $X^*$, $S(X^{*},Q(x),0)\neq \emptyset$. Since $X^*$ satisfies the property (HS), therefore $d(x_n^*,S(X^{*},Q(x),0))\to 0$. But due to smoothness of $X$, $S(X^*,Q(x),0)=J(x)$ is a singleton, say $\{x^*\}$. Thus $x_n^*\to x^*$, entailing $x^*(x_n)\to 1$. Therefore, by (HS) property of the space $X$, we have $d(x_n,S(X,x^*,0))\to 0$. Hence $X$ is an HLUR space.
\end{proof}

\begin{corollary}
Let $X$ and $X^*$ be strongly rotund space. Then $X$ and $X^*$ both are LUR spaces.
\end{corollary}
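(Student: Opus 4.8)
The plan is to reduce the statement to Theorem 5.1 together with the already recorded fact that a rotund HLUR space is LUR. Recall that a strongly rotund space is, by definition, reflexive, rotund, and has the property (KK). Thus both $X$ and $X^*$ are reflexive (reflexivity of one is equivalent to reflexivity of the other), both are rotund, and both satisfy (KK).

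First I would invoke Theorem 3.3: since $X$ is reflexive and satisfies (KK), it is NSC and has the property (HS); applying the same implication to $X^*$ shows that $X^*$ also has the property (HS). Next I would use the standard duality between rotundity and smoothness in the reflexive setting: rotundity of $X$ together with reflexivity forces $X^*$ to be smooth, and rotundity of $X^*$ together with reflexivity forces $X^{**}=X$ to be smooth. Hence $X$ is reflexive, both $X$ and $X^*$ are smooth, and both satisfy the property (HS) — which is precisely the hypothesis of Theorem 5.1.

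Applying Theorem 5.1 then gives that $X$ and $X^*$ are both HLUR spaces. Finally, as noted in the Introduction, for rotund spaces the property (HLUR) coincides with (LUR); since $X$ and $X^*$ are rotund and HLUR, we conclude that $X$ and $X^*$ are LUR, as required.

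There is essentially no serious obstacle here: the argument is a bookkeeping exercise assembling Theorem 3.3, the rotundity--smoothness duality for reflexive spaces, Theorem 5.1, and the implication \emph{rotund $+$ HLUR $\Rightarrow$ LUR}. The only point deserving a word of care is the duality step, where reflexivity is genuinely used to pass from rotundity of a space to smoothness of its \emph{dual} (the reverse implication ``$X^*$ smooth $\Rightarrow$ $X$ rotund'' being automatic), and to identify $X$ with $X^{**}$ when deducing smoothness of $X$ from rotundity of $X^*$.
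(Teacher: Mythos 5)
Your proposal is correct and follows essentially the same route as the paper: reflexivity plus rotundity of $X$ and $X^*$ gives smoothness of both by duality, the strong rotundity (via Theorem 3.3 / Corollary 3.4) gives the property (HS) for both, Theorem 5.1 yields that both are HLUR, and rotundity upgrades HLUR to LUR. The only cosmetic difference is that you cite Theorem 3.3 directly where the paper invokes its Corollary 3.4.
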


\begin{proof}
Here $X$ is reflexive. Thus both the spaces $X$ and $X^*$ are smooth. Also by Corollary 3.4, $X$ and $X^*$ satisfy the property (HS). Since $X, X^*$ are rotund, the result is attained by using Theorem 5.1.
\end{proof}
It is known that if $X^{**}$ is LUR for any equivalent norm, then $X$ is reflexive. Dutta and Lin \cite[Proposition 2.10]{Dutta2} proved that LUR property can also be replaced by strong local $U$-convexity of $X^{**}$ under any equivalent norm. Since every HLUR space is strongly locally $U$-convex, as shown in Theorem 4.1, we have the following result.
\begin{theorem}
Let $X^{**}$ be HLUR under an equivalent norm. Then $X$ is reflexive.
\end{theorem}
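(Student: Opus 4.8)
The plan is to obtain this statement as a direct consequence of the characterisation of HLUR spaces established in Theorem 4.3, combined with the renorming result of Dutta and Lin quoted just above. I would argue in two short steps.

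First, suppose that $X^{**}$ carries an equivalent norm $|\cdot|$ for which $(X^{**},|\cdot|)$ is an HLUR space. Applying Theorem 4.3 to this Banach space, the implication (a)$\Rightarrow$(d) shows that $(X^{**},|\cdot|)$ is strongly locally $U$-convex (it also satisfies the property (HS), but that is not needed here). Hence $X^{**}$ is strongly locally $U$-convex under the equivalent norm $|\cdot|$.

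Second, I would invoke \cite[Proposition 2.10]{Dutta2}: if the second dual of a Banach space is strongly locally $U$-convex under some equivalent norm, then the space is reflexive. Applying this with the space $X$ and the norm $|\cdot|$ on $X^{**}$ immediately yields that $X$ is reflexive, which completes the argument.

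The only points to be checked are bookkeeping rather than genuine obstacles: one should confirm that the proof of the implication (a)$\Rightarrow$(d) in Theorem 4.3 uses nothing beyond the definitions (no reflexivity or special structure of the underlying space), so that it applies verbatim to the renormed bidual $(X^{**},|\cdot|)$; and one should confirm that the Dutta--Lin statement is formulated for an \emph{arbitrary} equivalent norm on $X^{**}$, not merely for bidual norms induced by renormings of $X$. Both are the case, so the result follows with no substantive difficulty; all the mathematical content resides in Theorem 4.3 and in the cited proposition.
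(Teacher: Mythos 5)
Your argument is exactly the paper's: the paper also deduces strong local $U$-convexity of the renormed bidual from the HLUR property (it cites Theorem 4.1, though the implication is really (a)$\Rightarrow$(d) of Theorem 4.3, as you use) and then invokes Dutta--Lin \cite[Proposition 2.10]{Dutta2} to conclude reflexivity. The proposal is correct and matches the paper's proof.
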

The following are sufficient conditions for a space to satisfy the property (HLUR).
\begin{theorem}
Consider the following statements.
\begin{enumerate}[(a)]
\item $X$ is smooth and CLUR.

\item $X$ is ACS and CLUR.

\item $X$ is HLUR.

\end{enumerate}
Then (a)$\implies$(c) and (b)$\implies$(c).
\end{theorem}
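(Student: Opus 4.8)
The plan is to first dispose of (a) by reducing it to (b): every smooth space is ACS, so (a) $\implies$ (b) is automatic and it suffices to prove (b) $\implies$ (c). To see smoothness forces the ACS property, take $x,y\in S_X$ and $x^*\in S_{X^*}$ with $\|x+y\|=2$ and $x^*(x)=1$; picking $z^*\in S_{X^*}$ norming at $\frac{x+y}{2}$ gives $z^*\big(\frac{x+y}{2}\big)=1$, which together with $z^*(x)\le 1$, $z^*(y)\le 1$ forces $z^*(x)=z^*(y)=1$, so $z^*\in J(x)$; smoothness then yields $z^*=x^*$, whence $x^*(y)=1$.

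For (b) $\implies$ (c), assume $X$ is ACS and CLUR. Let $x,x_n\in S_X$ with $\|x_n+x\|\to 2$ and fix $x^*\in J(x)$ (note $S(X,x^*,0)\neq\emptyset$, since $x$ itself lies in it). The goal is $d(x_n,S(X,x^*,0))\to 0$. I would argue via the standard subsequence principle: if this fails, there are $\varepsilon>0$ and a subsequence $(x_{n_k})$ with $d(x_{n_k},S(X,x^*,0))\ge\varepsilon$ for all $k$.

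Since $\|x_{n_k}+x\|\to 2$ still holds, CLUR produces a further subsequence $(x_{n_{k_j}})$ converging in norm to some $x_0$, and $\|x_0\|=\lim_j\|x_{n_{k_j}}\|=1$, so $x_0\in S_X$. Letting $j\to\infty$ in $\|x_{n_{k_j}}+x\|\to 2$ gives $\|x_0+x\|=2$. Now the ACS property, applied to $x,x_0\in S_X$ and the functional $x^*\in S_{X^*}$ with $x^*(x)=1$, forces $x^*(x_0)=1$, i.e.\ $x_0\in S(X,x^*,0)$. But then $d(x_{n_{k_j}},S(X,x^*,0))\le\|x_{n_{k_j}}-x_0\|\to 0$, contradicting $d(x_{n_{k_j}},S(X,x^*,0))\ge\varepsilon$. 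Hence $d(x_n,S(X,x^*,0))\to 0$, so $X$ is HLUR.

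I do not expect a genuine obstacle here; the argument is a routine compactness-plus-subsequence extraction. The two points requiring a little care are that the distance to $S(X,x^*,0)$ must be dominated by the distance to the single limit point $x_0$ — which is exactly why one needs the \emph{norm} compactness coming from CLUR rather than mere weak compactness — and that the ``every subsequence has a further subsequence converging to $0$'' device is needed because CLUR supplies only a convergent subsequence, not convergence of the full sequence $(x_n)$.
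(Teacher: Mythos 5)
Your proof is correct and follows essentially the same route as the paper: extract a norm-convergent subsequence via CLUR, use ACS to place its limit $x_0$ in $S(X,x^*,0)$, and conclude by the subsequence (here, contradiction) principle. The only structural difference is that you handle (a) by noting smooth $\Rightarrow$ ACS and reducing to (b), whereas the paper runs the same extraction argument twice --- once using $A_0(x)=S(X,x^*,0)$ under smoothness, once using ACS --- so your version is marginally more economical but not a different method.
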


\begin{proof}
(a)$\implies$(c): Let $x,x_n\in S_X(n\in \mathbb{N})$, such that $\|x_n+x\|\to 2$ and let $x^*\in J(x)$. Let $(x_{n_k})$ be any subsequence of $(x_n)$. Then $\|x_{n_k}+x\|\to 2$. Since $X$ is a CLUR space, there exists a subsequence $(x_{n_{k_j}})$ of $(x_{n_k})$ such that $x_{n_{k_j}}\to x_0\in A_0(x)$. But as $J(x)=\{x^*\}$, so $A_0(x)=S(X,x^*,0)$. Therefore $$d(x_{n_{k_j}},S(X,x^*,0))=\|x_{n_{k_j}}-x_0\|\to 0.$$ Consequently, $d(x_{n},S(X,x^*,0))=\|x_{n}-x_0\|\to 0$. Hence $X$ is HLUR.  \\\\
(b)$\implies$(c): Let $x,x_n\in S_X(n\in \mathbb{N})$, such that $\|x_n+x\|\to 2$ and let $x^*\in J(x)$. Let $(x_{n_k})$ be any subsequence of $(x_n)$. Then $\|x_{n_k}+x\|\to 2$. Since $X$ is a CLUR space, there exists a subsequence $(x_{n_{k_j}})$ of $(x_{n_k})$ such that $x_{n_{k_j}}\to x_0\in A_0(x)$. Observe that $x^*(x_0)=1$ as $X$ is ACS and so $d(x_{n_{k_j}}, S(X,x^*,0))\to 0$. Hence the result follows.
\end{proof}

However, the condition smoothness or ACS can not dropped from Theorem 5.4 as shown by the following example.

\begin{example}
Consider the space $(\mathbb{R}^2,\|.\|_{\infty})$, where $\|(x,y)\|_{\infty}=\max\{|x|,|y|\}$. Because of finite dimensionality, the space $(\mathbb{R}^2,\|.\|_{\infty})$ is CLUR. But it is not HLUR. In fact take $x=(1,1), x^*=(\frac{1}{2},\frac{1}{2})$ and $x_n=(0,1)$ for all $n\in \mathbb{N}$. Then $\|x_n+x\|_{\infty}=2$ for all $n\in \mathbb{N}$ and $x^*(x)=1$. But $d(x_n,S(X,x^*,0))=1 \text{ for each }n\in \mathbb{N}$.
\end{example}
Quotient space of an HLUR space need not be HLUR, as shown by the following example.
\begin{example}
Define a norm on $\ell^1$ by $$\|x\|=(\|x\|_1^2+\|x\|_2^2)^{\frac{1}{2}},$$ where $\|.\|_p$ denotes the canonical norm on $\ell^p$. Then $\|.\|$ is equivalent to $\|.\|_1$ as $$\|x\|_1\leq \|x\|\leq \sqrt{2}\|x\|_1$$ for all $x\in \ell^1$ and with this norm $\|.\|$, $\ell^1$ is an LUR space \cite[Lemma 13.26]{Fabian}. Consequently $X=(\ell^1,\|.\|)$ is an HLUR space. Consider the basic sequence $(e_n)$ in $X$, where $$e_n=(0,\ldots,\underbrace{1}_{n\text{ th place }},0,\ldots) \text{ for all }n\in \mathbb{N},$$ and the block basic sequence $(y_n)$ of $(e_n)$, where $$y_n=\sum\limits_{k=\frac{1}{2}n(n-1)+1}^{\frac{1}{2}n(n+1)}e_k \text{ for all }n\in \mathbb{N}.$$ Then it is not difficult to see that $$\sum\limits_{n=1}^{\infty}\lambda_n|t_n|\leq \|\sum\limits_{n=1}^{\infty}t_n\frac{y_n}{\|y_n\|}\|,$$ where $\lambda_n=\frac{1}{\sqrt{1+\frac{1}{n}}}\to 1$. Thus $C$, the closed linear span of $(y_n)$ is an asymptotically isometric copy of $\ell^1$. It follows that $(C,\|.\|)$ is linearly isometric to $(\ell^1,\|.\|_0)$, where $\|.\|_0$ is an asymptotically isometrically equivalent norm determined by the sequence $(\lambda_n)$. Let $\{x_n:n\in \mathbb{N}\}$ be a dense subset of the closed unit ball of $(\ell^1,\|.\|_1)$. Define $T:(\ell^1,\|.\|_0)\to (\ell^1,\|.\|_1)$ by $$T((t_n))=\sum\limits_{n=1}^{\infty}\lambda_nt_n x_n \text{ for all }(t_n)\in \ell^1.$$ Following arguments in \cite[Theorem 5.9]{Fabian2}, we can show that the quotient space of $ (\ell^1,\|.\|_0) $ by $\ker T$ is linearly isometric to $(\ell^1,\|.\|_1)$. Since the unit vector basis $(e_n)$ is symmetric in $X$, $C$ being a closed linear span of a block basic sequence of $(e_n)$, is norm-1 complemented in $X$. If $P:X\to C$ is the corresponding projection, then $X/\ker P$ is linearly isometric to $C$. Hence $(\ell^1,\|.\|_1)$ is linearly isometric to a quotient of $X$. But $(\ell^1,\|.\|_1)$ is not an HLUR space. Consider $e_n (n>1),e_1\in S_{\ell^1}$ such that $\|e_1+e_n\|=2$. Choose $e_1^*\in \ell^{\infty}$ such that $e_1^{*}(e_1)=1$. But $\|e_n-e_1\|=2$ for all $n>1$.
 \end{example}
\begin{definition}
A nonempty subset $K$ of $X$ is said to be proximinal if for every $x\in X$, the set $$P_K(x)=\{y\in K:\|x-y\|=d(x,K)\}$$ is nonempty.
\end{definition}
We know that a subspace $Y$ of $X$ is proximinal if and only if $Q(B_X)=B_{X/Y}$, where $Q$ is the canonical quotient map from $X$ onto $X/Y$. The following is a sufficient condition for (HLUR) property of a quotient space.
\begin{theorem}
	Let $X$ be an HLUR space and let $Y$ be a proximinal subspace of $X$. Then $X/Y$ is HLUR.
\end{theorem}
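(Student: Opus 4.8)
The plan is to transfer the problem from $X/Y$ back up to $X$ via the quotient map $Q:X\to X/Y$, apply the HLUR property of $X$ there, and then push the resulting approximating sequence back down. Recall the standard isometric identification $(X/Y)^{*}\cong Y^{\perp}$: every $\phi\in (X/Y)^{*}$ corresponds to $x^{*}:=\phi\circ Q\in Y^{\perp}$ with $\|x^{*}\|=\|\phi\|$ and $\phi(Qz)=x^{*}(z)$ for all $z\in X$. So let $\bar x,\bar x_n\in S_{X/Y}$ with $\|\bar x_n+\bar x\|\to 2$ and let $\phi\in J(\bar x)$; put $x^{*}=\phi\circ Q$, so $x^{*}\in S_{X^{*}}\cap Y^{\perp}$. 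Using proximinality of $Y$ --- equivalently, $Q(B_X)=B_{X/Y}$ --- I would pick $x,x_n\in B_X$ with $Qx=\bar x$ and $Qx_n=\bar x_n$; since a preimage in $B_X$ of a norm-one element of $X/Y$ necessarily has norm one, in fact $x,x_n\in S_X$. Moreover $x^{*}(x)=\phi(\bar x)=1$, so $x^{*}\in J(x)$.

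Next I would check that $\|x_n+x\|\to 2$: since $Q$ is a norm-one quotient map, $2\ge\|x_n+x\|\ge\|Q(x_n+x)\|=\|\bar x_n+\bar x\|\to 2$. Now the HLUR property of $X$ applies to $x,x_n\in S_X$ and $x^{*}\in J(x)$, yielding $d(x_n,S(X,x^{*},0))\to 0$; choose $y_n\in S(X,x^{*},0)$ with $\|x_n-y_n\|\to 0$. Since $x^{*}\in Y^{\perp}$ we have $\phi(Qy_n)=x^{*}(y_n)=1$, while $Qy_n\in B_{X/Y}$, hence $Qy_n\in S(X/Y,\phi,0)$. Therefore $d(\bar x_n,S(X/Y,\phi,0))\le\|\bar x_n-Qy_n\|=\|Q(x_n-y_n)\|\le\|x_n-y_n\|\to 0$, which is precisely the HLUR condition for $X/Y$.

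Once the correct objects are set up the argument is essentially bookkeeping, so I do not anticipate a serious obstacle; the single essential input is proximinality, which is exactly what provides the norm-preserving lifts $x,x_n\in S_X$ needed to invoke HLUR of $X$ (without it one would only obtain lifts of norm close to $1$, forcing an extra normalisation or perturbation step). The one place deserving a careful line is the verification that $Qy_n$ belongs to the slice $S(X/Y,\phi,0)$, which rests on $x^{*}$ annihilating $Y$; the identification $(X/Y)^{*}\cong Y^{\perp}$ and the contraction estimates are routine.
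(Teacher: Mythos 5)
Your proof is correct and follows essentially the same route as the paper: lift $\bar x,\bar x_n$ to $S_X$ via $Q(B_X)=B_{X/Y}$ (proximinality), identify $J(\bar x)$ with functionals in $Y^{\perp}$ so that the lifted functional exposes $x$, apply HLUR in $X$, and push the approximating sequence $(y_n)$ back down through $Q$. Your write-up is in fact slightly more explicit than the paper's (e.g.\ in verifying $Qy_n\in S(X/Y,\phi,0)$), but there is no substantive difference.
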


\begin{proof}
	Let $x+Y, x_n+Y\in S_{X/Y}$ for all $n\in \mathbb{N}$ be such that $$\|(x_n+x)+Y\|\to 2.$$ Since $Y$ is proximinal, therefore $Q(B_X)=B_{X/Y}$. Thus $x+Y,x_n+Y\in Q(B_X)$ and so $x,x_n\in B_X$. Again $\|x+Y\|\leq \|x\|, \|x_n+Y\|\leq \|x_n\|$. It follows that $x,x_n\in S_X$. Also $$2\leftarrow\|(x_n+x)+Y\|\leq \|x_n+x\|\leq \|x\|+\|x_n\|\leq 2.$$ Therefore $\|x_n+x\|\to 2$. Suppose $x^*\in J(x+Y)$. Since $(X/Y)^*$ is isometrically isomorphic to $Y^{\perp}=\{y^*\in X^*: y^*(y)=0 \text{ for all }y\in Y\}$, therefore $x^*\in J(x)$. By HLUR property of $X$, $$d(x_n,S(X,x^*,0))\to 0.$$ It follows that there exists a sequence $(x_n^{\prime})$ in  $S(X,x^*,0)$ such that $$\|x_n-x_n^{\prime}\|\to 0.$$ Clearly, $x_n^{\prime}+Y\in S(X/Y,x^*,Y)$ for all $n\in \mathbb{N}$ and $$\|(x_n+Y)-(x_n^{\prime}+Y)\|\leq \|x_n-x_n^{\prime}\|\to 0.$$ It follows that $$d(x_n+Y,S(X/Y,x^*,Y))\to 0.$$ Hence $X/Y$ is an HLUR space.
\end{proof}

As it is mentioned in \cite[p.77]{Chakrabarty}, HLUR spaces are not comparable with CLUR spaces, it becomes worthwhile to ask under what condition an HLUR space becomes CLUR. We provide such a condition below. 
\begin{theorem}
If $X$ is an HLUR space which is also NSC, then $X$ is CLUR.
\end{theorem}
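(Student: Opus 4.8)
The plan is to unwind the definitions directly: given $x,x_n\in S_X$ with $\|x_n+x\|\to 2$, I want to extract a norm-convergent subsequence of $(x_n)$, and the idea is to approximate $(x_n)$ by a sequence lying in a fixed compact set and then invoke sequential compactness.

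First I would fix a supporting functional $x^*\in J(x)$; this is nonempty by the Hahn--Banach theorem. Since $x\in S_X$ and $x^*(x)=1$, the set $S(X,x^*,0)=\{y\in B_X:x^*(y)=1\}$ is nonempty, and by the NSC hypothesis it is compact. Next, applying the (HLUR) property to the sequence $(x_n)$ and the functional $x^*$ gives $d(x_n,S(X,x^*,0))\to 0$, so I can choose $y_n\in S(X,x^*,0)$ with $\|x_n-y_n\|\to 0$. Because $S(X,x^*,0)$ is compact, the sequence $(y_n)$ admits a subsequence $(y_{n_k})$ converging in norm to some $y\in S(X,x^*,0)$. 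Then from $\|x_{n_k}-y_{n_k}\|\to 0$ and $y_{n_k}\to y$ I get $x_{n_k}\to y$, so $(x_n)$ has a convergent subsequence, which is exactly the (CLUR) condition.

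There is no real obstacle here; the argument is essentially a two-line reduction. The only points requiring a word of justification are that $J(x)\neq\emptyset$ and that $S(X,x^*,0)$ is nonempty (hence genuinely compact rather than vacuously so), both of which are immediate. I would present the proof in this order: fix $x^*\in J(x)$; note compactness of $S(X,x^*,0)$; use (HLUR) to get the approximating sequence $(y_n)$; pass to a convergent subsequence by compactness; conclude $x_{n_k}\to y$.
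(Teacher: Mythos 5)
Your proof is correct and coincides with the paper's own argument: fix $x^*\in J(x)$, use (HLUR) to approximate $(x_n)$ by a sequence in the compact (by NSC) face $S(X,x^*,0)$, extract a convergent subsequence, and transfer the limit back to $(x_n)$. The small added remarks (nonemptiness of $J(x)$ and of $S(X,x^*,0)$) are fine and only make explicit what the paper leaves implicit.
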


\begin{proof}
Suppose $x,x_n\in S_X(n\in \mathbb{N})$ such that $\|x_n+x\|\to 2$. Then for all $x^*\in J(x)$, $d(x_n,S(X,x^*,0))\to 0$. Thus there exists a sequence $(y_n)$ in $S(X,x^*,0)$ such that $\|x_n-y_n\|\to 0$. Since $X$ is an NSC space, $S(X,x^*,0)$ is compact. Therefore there is a subsequence $(y_{n_k})$ of $(y_n)$ such that $(y_{n_k})$ converges to some $y$ in $S(X,x^*,0)$. Thus $$\|x_{n_k}-y\|\leq \|x_{n_k}-y_{n_k}\|+\|y_{n_k}-y\|\to 0.$$ It follows that $x_{n_k}\to y$ and so $X$ is CLUR.
\end{proof}

\begin{definition}\cite{Cabrera} 
$X$ is said to be weakly nearly strictly convex (WNSC) if for every $x^*\in S_{X^*}$, $S(X,x^*,0)$ is weakly compact.
\end{definition}
Clearly, every reflexive space is necessarily a WNSC space.
\begin{theorem}
Every HLUR space which is also WNSC is weakly CLUR.
\end{theorem}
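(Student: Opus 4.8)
The plan is to combine the (HLUR) hypothesis with weak compactness of a suitable exposed face, extracting a weakly convergent subsequence by the Eberlein--\v Smulian theorem. First I would fix $x,x_n\in S_X$ with $\|x_n+x\|\to 2$ and, using the Hahn--Banach theorem, pick some $x^*\in J(x)$ (this is nonempty since $x\in S_X$). Note $x\in S(X,x^*,0)$, so this slice is nonempty, and by (HLUR) we get $d(x_n,S(X,x^*,0))\to 0$; hence there is a sequence $(y_n)$ in $S(X,x^*,0)$ with $\|x_n-y_n\|\to 0$.

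Next I would invoke the (WNSC) property: $S(X,x^*,0)$ is weakly compact, hence weakly sequentially compact by the Eberlein--\v Smulian theorem, and moreover weakly closed. Therefore $(y_n)$ has a subsequence $(y_{n_k})$ with $y_{n_k}\xrightarrow{w}y$ for some $y\in S(X,x^*,0)$. Since $S(X,x^*,0)\subset A_0(x)$, we have $y\in A_0(x)$.

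Finally I would combine the two facts: writing $x_{n_k}=y_{n_k}+(x_{n_k}-y_{n_k})$, the term $x_{n_k}-y_{n_k}\to 0$ in norm, hence weakly, so $x_{n_k}\xrightarrow{w}y\in A_0(x)$. This exhibits the required weakly convergent subsequence, so $X$ is weakly CLUR.

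I do not expect any real obstacle here; the proof is essentially a bookkeeping argument. The only point requiring a named tool is passing from weak compactness of $S(X,x^*,0)$ to the existence of a weakly convergent subsequence of $(y_n)$, which is exactly the content of the Eberlein--\v Smulian theorem (together with the fact that weakly compact sets are weakly closed, so the weak limit lies in the slice).
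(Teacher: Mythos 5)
Your proposal is correct and follows essentially the same route as the paper: use (HLUR) to approximate $(x_n)$ by a sequence $(y_n)$ in the weakly compact slice $S(X,x^*,0)$, extract a weakly convergent subsequence (the paper leaves the Eberlein--\v Smulian step implicit, you make it explicit), and transfer the weak limit to $(x_{n_k})$ since $\|x_{n_k}-y_{n_k}\|\to 0$. No gaps; your decomposition $x_{n_k}=y_{n_k}+(x_{n_k}-y_{n_k})$ is just a cleaner phrasing of the paper's final functional estimate.
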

\begin{proof}
Let $X$ be a WNSC, HLUR space. Suppose $x,x_n(n\in \mathbb{N})$ such that $\|x_n+x\|\to 2$ and suppose $x^*\in J(x)$. Then by HLUR property of $X$, $d(x_n,S(X,x^*,0))\to 0$. Thus there exists a sequence $(y_n)$ in $S(X,x^*,0)$ such that $\|x_n-y_n\|\to 0$. Since $X$ is a WNSC space, $S(X,x^*,0)$ is weakly compact and so there exists a subsequence $(y_{n_k})$ of $(y_n)$ converges weakly to some $y$ in $S(X,x^*,0)\subset A_0(x)$. Now for any $y^*\in X^*$, \begin{align*}
& |y^*(x_{n_k})-y^*(y)|\\\leq& |y^*({x_{n_k}})-y^*(y_{n_k})|+|y^*(y_{n_k})-y^*(y)|\\ \leq &\|y^*\|\|x_{n_k}-y_{n_k}\|+|y^*(y_{n_k})-y^*(y)|\to 0.
\end{align*} 
Hence $X$ is weakly CLUR.
\end{proof}
Chakrabarty \cite{Chakrabarty} proved that every CLUR space satisfies the  property (HS). We present the proof here for completeness. 

\begin{proposition}\cite[Proposition 4.3.1]{Chakrabarty}
Every CLUR space satisfies the (HS) property.
\end{proposition}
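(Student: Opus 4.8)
The plan is to argue by contradiction, reducing the statement about sequences in $B_X$ to the hypothesis of CLUR, which concerns sequences on $S_X$. Assume $X$ is CLUR and fix $x^*\in S_{X^*}$ with $S(X,x^*,0)\neq\emptyset$. Choose once and for all a point $x\in S(X,x^*,0)$; since $x^*\in S_{X^*}$ and $x\in B_X$ this forces $x^*(x)=1$ and $\|x\|=1$, so $x\in S_X$ and $x^*\in J(x)$. Let $(x_n)\subset B_X$ with $x^*(x_n)\to 1$, and suppose towards a contradiction that $d(x_n,S(X,x^*,0))\not\to 0$; then there are $\varepsilon>0$ and a subsequence $(x_{n_k})$ with $d(x_{n_k},S(X,x^*,0))\geq\varepsilon$ for all $k$.

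First I would record that $x^*(x_{n_k})\leq\|x_{n_k}\|\leq 1$ forces $\|x_{n_k}\|\to 1$, so in particular $x_{n_k}\neq 0$ for large $k$; for such $k$ set $y_k=x_{n_k}/\|x_{n_k}\|\in S_X$, and note $\|y_k-x_{n_k}\|=\bigl|1-\|x_{n_k}\|\bigr|\to 0$. Next I would check that the CLUR hypothesis holds for the pair $x$, $(y_k)$: from $\|x_{n_k}+x\|\geq x^*(x_{n_k})+x^*(x)\to 2$ and $\|x_{n_k}+x\|\leq 2$ we get $\|x_{n_k}+x\|\to 2$, and then $\bigl|\,\|y_k+x\|-\|x_{n_k}+x\|\,\bigr|\leq\|y_k-x_{n_k}\|\to 0$ gives $\|y_k+x\|\to 2$.

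Now CLUR applies: there is a subsequence $(y_{k_j})$ converging in norm to some $y\in S_X$. Then $\|y+x\|=2$, so $y\in A_0(x)$, and $x^*(y)=\lim_j x^*(y_{k_j})=\lim_j x^*(x_{n_{k_j}})/\|x_{n_{k_j}}\|=1$, so in fact $y\in S(X,x^*,0)$. Since $\|x_{n_{k_j}}-y_{k_j}\|\to 0$, also $x_{n_{k_j}}\to y$, whence $d(x_{n_{k_j}},S(X,x^*,0))\to 0$, contradicting $d(x_{n_k},S(X,x^*,0))\geq\varepsilon$. This contradiction yields $d(x_n,S(X,x^*,0))\to 0$, i.e.\ $X$ satisfies the property (HS). I expect the only genuine point requiring care to be the normalisation step, namely moving from a sequence in $B_X$ (the setting of (HS)) to a sequence on $S_X$ (the setting of CLUR) and verifying that $\|y_k+x\|\to 2$ is preserved; the remainder is a routine subsequence argument.
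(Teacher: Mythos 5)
Your proof is correct and follows essentially the same route as the paper: fix $x\in S(X,x^*,0)$, deduce $\|x_n+x\|\to 2$ from $x^*(x_n)\to 1$, and use CLUR to extract a norm-convergent subsequence whose limit lies in $S(X,x^*,0)$. The only differences are cosmetic: you argue by contradiction rather than via the subsequence-of-subsequence device, and you carefully normalise $x_{n_k}$ to land in $S_X$ before invoking CLUR --- a step the paper's proof silently skips by applying CLUR directly to a sequence in $B_X$.
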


\begin{proof}
Let $X$ be a CLUR space. Let $x^*\in S_{X^*}$ be such that $S(X,x^*,0)\neq \emptyset$. Let $(x_n)$ be a sequence in $B_X$ such that $x^*(x_n)\to 1$. If $x\in S(X,x^*,0)$, then for any subsequence $(x_{n_k})$ of $(x_n)$, we have $$2\geq \|x_{n_k}+x\|\geq x^*(x_{n_k}+x)=x^*(x_{n_k})+x^*(x)\to 2,$$ which implies that $\|x_{n_k}+x\|\to 2$. Since $X$ is CLUR, $(x_{n_k})$ has a subsequence $(x_{n_{k_j}})$ converging to some point $x_0\in X$. Clearly $x_0\in S(X,x^*,0)$, and so $d(x_{n_{k_j}}, S(X,x^*,0))\to 0$. Consequently $d(x_{n}, S(X,x^*,0))\to 0$. Hence $X$ has the property (HS).
\end{proof}
The following result characterises the property (CLUR) in terms of nearly strict convexity, local $U$-convexity and the property (HS).
\begin{theorem}
The following statements are equivalent.

\begin{enumerate}[(a)]
\item $X$ is CLUR.

\item $X$ is weakly CLUR, NSC and $X$ satisfies the property (HS).

\item $X$ is locally $U$-convex, NSC and $X$ satisfies the property (HS).
\end{enumerate}
\end{theorem}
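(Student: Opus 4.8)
The plan is to run the cycle $(c)\Rightarrow(b)\Rightarrow(a)\Rightarrow(c)$, the content being concentrated in the last implication; the other two are short arguments with the slices $S(X,x^*,0)$. For $(c)\Rightarrow(b)$, since NSC and (HS) occur in both statements it is enough to show that $X$ is weakly CLUR. Let $x,x_n\in S_X$ with $\|x_n+x\|\to2$. Local $U$-convexity gives $x^*\in J(x)$ with $x^*(x_n)\to1$; as $x\in S(X,x^*,0)\neq\emptyset$, property (HS) yields $y_n\in S(X,x^*,0)$ with $\|x_n-y_n\|\to0$; by NSC the slice $S(X,x^*,0)$ is compact, so along a subsequence $y_{n_k}\to y\in S(X,x^*,0)\subseteq A_0(x)$, and hence $x_{n_k}\to y$ in norm, in particular weakly. (This in fact already produces a norm-convergent subsequence, so $X$ is even CLUR.)

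For $(b)\Rightarrow(a)$, let $x,x_n\in S_X$ with $\|x_n+x\|\to2$. Weak CLUR supplies a subsequence $x_{n_k}\xrightarrow{w}x_0\in A_0(x)$; choose $x^*\in J(x)$ with $x^*(x_0)=1$, so that $x^*(x_{n_k})\to1$. Since $S(X,x^*,0)\neq\emptyset$, property (HS) gives $z_k\in S(X,x^*,0)$ with $\|x_{n_k}-z_k\|\to0$, and by NSC a further subsequence of $(z_k)$, and with it of $(x_n)$, converges in norm; hence $X$ is CLUR.

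Finally $(a)\Rightarrow(c)$. Property (HS) is precisely the preceding proposition. For NSC, if $S(X,x^*,0)\neq\emptyset$ fix a point $x_0$ in it; for any sequence $(y_n)$ in $S(X,x^*,0)$ one checks $x^*(y_n)=x^*(x_0)=1$, whence $\|y_n+x_0\|=2$, so CLUR produces a norm-convergent subsequence of $(y_n)$, and since the slice is closed it is sequentially compact, hence compact. It remains to prove that a CLUR space is locally $U$-convex. Given $x,x_n\in S_X$ with $\|x_n+x\|\to2$, CLUR lets me pass to a convergent subsequence $x_{n_k}\to x_0$, so that $\|x_0+x\|=2$ and there is $x^*\in J(x)$ with $x^*(x_0)=1$, giving $x^*(x_{n_k})\to1$. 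The main obstacle is to upgrade this to $x^*(x_n)\to1$ for the \emph{whole} sequence with a single fixed $x^*$, as the definition of local $U$-convexity demands: a priori, distinct subsequences of $(x_n)$ could cluster at points of $A_0(x)$ carrying mutually incompatible support functionals in $J(x)$. Since CLUR makes $(x_n)$ relatively norm-compact, it would be enough to know that all its cluster points lie in one common slice $S(X,x^*,0)$ with $x^*\in J(x)$; the strategy would be to use NSC to keep the relevant slices compact and (HS) to force the entire sequence — not merely a subsequence — into every neighbourhood of such a slice. I expect this passage from subsequential to full-sequence information to be the genuine difficulty; the remaining steps are routine.
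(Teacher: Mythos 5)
Your implications (c)$\Rightarrow$(b) and (b)$\Rightarrow$(a) are correct; in fact the argument you give for (c)$\Rightarrow$(b) already produces a norm-convergent subsequence and is essentially the paper's proof of (c)$\Rightarrow$(a). The genuine gap is exactly where you locate it: in (a)$\Rightarrow$(c) you establish (HS) (via the preceding proposition) and NSC (your compactness argument for the slices is fine), but you never prove local $U$-convexity -- you only sketch a strategy and defer the passage from subsequential information to a single functional $x^*\in J(x)$ working for the whole sequence. The paper does not carry out this upgrade by hand either: it runs the cycle (a)$\Rightarrow$(b)$\Rightarrow$(c)$\Rightarrow$(a), obtaining (a)$\Rightarrow$(b) from Proposition 5.12 (CLUR implies (HS)), from the cited result \cite[Corollary 3.5]{Dutta1} (CLUR implies weakly CLUR) and from compactness of $A_0(x)$ (for NSC), and then settles (b)$\Rightarrow$(c) by invoking the known fact that every weakly CLUR space is locally $U$-convex. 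Thus the step you identify as ``the genuine difficulty'' is absorbed into the results of Dutta and Shunmugaraj rather than proved directly, and your proposal is missing precisely that ingredient.

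Moreover, the strategy you sketch (``force the entire sequence into every neighbourhood of one slice $S(X,x^*,0)$'') cannot succeed in the form stated, because CLUR, NSC and (HS) together do not force the whole-sequence statement with one fixed $x^*\in J(x)$. In $(\mathbb{R}^2,\|\cdot\|_\infty)$, which is CLUR, NSC and has (HS), take $x=(1,1)$ and let $x_n$ alternate between $(1,0)$ and $(0,1)$; then $\|x_n+x\|_\infty=2$ for all $n$, the two cluster points lie in two different exposed faces of the ball, and for $x^*=(a,1-a)\in J(x)$ the values $x^*(x_n)$ oscillate between $a$ and $1-a$, so no single $x^*\in J(x)$ gives $x^*(x_n)\to 1$. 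Hence any correct proof of (a)$\Rightarrow$(c) must use local $U$-convexity in the sense of the cited sources, where the supporting functional is allowed to depend on $n$ (equivalently $\sup_{x^*\in J(x)}x^*(x_n)\to 1$), i.e.\ it must be mediated through the weakly CLUR formulation as in the paper's route; pinning the entire sequence into a single slice, as you propose, fails whenever $A_0(x)$ splits into several maximal faces.
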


\begin{proof}
(a)$\implies$(b): By Proposition 5.12, $X$ satisfies the property (HS). Also by \cite[Corollary 3.5]{Dutta1}, $X$ is weakly CLUR.
Since $X$ is CLUR, therefore $A_0(x)$ is compact. Thus for each $x^*\in S_{X^*}$, $S(X,x^*,0)$ being a closed set is compact. Thus $X$ is NSC.\\\\
(b)$\implies$(c): It follows from the fact that every weakly CLUR space is locally $U$-convex.\\\\
(c)$\implies$(a): To show that $X$ is CLUR, let $x,x_n(n\in \mathbb{N})\in S_X$ be such that $\|x_n+x\|\to 2$. Since $X$ is locally $U$-convex, therefore there exists $x^*\in J(x)$ such that $x^*(x_n)\to 1$. Thus by the property (HS) of $X$, we have $$d(x_n,S(X,x^*,0))\to 0.$$ It follows that there exists a sequence $(y_n)$ in $S(X,x^*,0)$ such that $\|x_n-y_n\|\to 0$. But $S(X,x^*,0)$ is compact and so there exists a subsequence $(y_{n_k})$ of $(y_n)$ that converges to some $y_0\in S(X,x^*,0)$. It follows that $(x_{n_k})$ converges to $y_0$. Hence $X$ is CLUR.
\end{proof}
\textbf{Acknowledgements}: The author would like to thank the anonymous referee for valuable comments which helped to improve the quality of the paper. Thanks are due to Prof. Vladimir Kadets, 
and Prof. William B. Johnson, for their valuable suggestions regarding the Example 5.6 through Researchgate and mathoverflow platforms respectively. The author is also grateful to Dr. Anjan Kumar Chakrabarty for his continued support.

\end{document}